\newcommand{\IR}{\mathbb R}
\newcommand{\br}{\mathbf r}
\newcommand{\bp}{\mathbf p}
\newcommand{\bs}{\mathbf s}
\newcommand{\bt}{\mathbf t}
\newcommand{\e}{\varepsilon}
\newcommand{\sign}{\mathrm{sign}}
\newcommand{\jj}{\dot\jmath}
\newcommand{\jjj}{\ddot\jmath}
\newcommand{\K}{\mathcal K}
\newcommand{\hS}{\breve S}
\newtheorem{theorem}{Theorem}[section]
\newtheorem{corollary}[theorem]{Corollary}
\newtheorem{lemma}[theorem]{Lemma}
\newtheorem{problem}[theorem]{Problem}
\newtheorem{proposition}[theorem]{Proposition}
\theoremstyle{definition}
\newtheorem{definition}[theorem]{Definition}
\newtheorem{remark}[theorem]{Remark}
\title[Every non-smooth $2$-dimensional Banach space has the Mazur--Ulam property]{Every non-smooth $2$-dimensional Banach space has\\ the Mazur--Ulam property}
\author{Taras Banakh and Javier Cabello S\'anchez}
\address{T.Banakh: Ivan Franko National University of Lviv (Ukraine) and Jan Kochanowski University in Kielce (Poland)}
\email{t.o.banakh@gmail.com}
\address{J.~Cabello S\'anchez: Departamento  de  Matem\'aticas  and  Instituto  de  Matem\'aticas.   Universidad  de  Extremadura, Avda.  de Elvas s/n, 06006, Badajoz, Spain}
\email{oco@unex.es}
\subjclass{46B04, 46B20, 52A21, 52A10, 53A04, 54E35, 54E40}
\keywords{Tingley's Problem, Mazur--Ulam property, smooth Banach space, isometry}
\begin{document}
\begin{abstract} A Banach space $X$ has {\em the Mazur--Ulam property} if any  isometry from the unit sphere of $X$ onto the unit sphere of any other Banach space $Y$ extends to a linear isometry of the Banach spaces $X,Y$.
A Banach space $X$ is called {\em smooth} if the unit ball has a unique supporting functional at each point of the unit sphere. We prove that each non-smooth 2-dimensional Banach space has the Mazur--Ulam property.
\end{abstract}
\maketitle

\section{Introduction}

By the classical result of Mazur and Ulam \cite{MU}, every bijective isometry  between Banach spaces is affine. This result essentially asserts that the metric structure of a Banach space determines its linear structure. In  \cite{Man} Mankiewicz proved that every bijective isometry $f:B_X\to B_Y$ between the unit balls of two Banach spaces $X,Y$ extends to a linear isometry of the Banach spaces. In \cite{Tingley} Tingley asked if the unit balls in this result of Mankiewicz can be replaced by the unit spheres. More precisely, he posed the following problem.

\begin{problem}[Tingley, 1987]\label{prob:Tingley} Let $f:S_X\to S_Y$ be a bijective isometry between the unit spheres of two Banach spaces $X,Y$. Can $f$ be extended to a linear isometry between the Banach spaces $X,Y$?
\end{problem}

Here for a Banach space $(X,\|\cdot\|)$ by 
$$B_X=\{x\in X:\|x\|\le 1\}\quad\mbox{and}\quad S_X=\{x\in X:\|x\|=1\}$$we denote the unit ball and unit sphere of $X$, respectively.

Tingley's Problem~\ref{prob:Tingley} remains open even for 2-dimensional Banach spaces. This problem can be equivalently reformulated in terms of the Mazur--Ulam property,  introduced by Cheng and Dong \cite{CD} and widely used in the the literature devoted to Tingley's problem \cite{FPP, Per, Tan}. 

\begin{definition} A Banach space $X$ is defined to have the {\em Mazur--Ulam property} if every  isometry $f:S_X\to S_Y$ of $S_X$ onto the unit sphere $S_Y$ of an arbitrary Banach space $Y$ extends to a linear isometry of the Banach spaces $X,Y$.
\end{definition} 

In fact, Tingley's Problem~\ref{prob:Tingley} asks whether every Banach space has the Mazur--Ulam property. There are many results on the Mazur--Ulam property in some special Banach spaces like $C(K)$, $c_0(\Gamma)$, $\ell_p(\Gamma)$, $L_p(\mu)$, see the survey \cite{Per}. By a result of Kadets and Mart\'\i n \cite{KM}, every polyhedral finite-dimensional Banach space has the Mazur--Ulam property. 

For 2-dimensional Banach spaces this result of Kadets and Mart\'\i n was improved by Cabello S\'anchez who proved the following theorem in \cite{San}.

\begin{theorem}\label{t:CS} A $2$-dimensional Banach space  has the Mazur--Ulam property if  is not strictly convex.
\end{theorem}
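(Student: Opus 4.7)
Given an isometry $f\colon S_X\to S_Y$, my plan is to construct a linear isometric extension $F\colon X\to Y$, leveraging the non-degenerate segment in $S_X$ guaranteed by failure of strict convexity.

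\emph{Preliminaries.} Since $\dim X=2$, the sphere $S_X$ is compact, hence $S_Y=f(S_X)$ is compact and $Y$ is finite-dimensional. The classical preservation of antipodes for sphere isometries in finite dimensions yields $f(-x)=-f(x)$ for every $x\in S_X$; since $S_X$ is a topological circle, so is $S_Y$, forcing $\dim Y=2$. Define $F\colon X\to Y$ by $F(0)=0$ and $F(tx)=tf(x)$ for $x\in S_X$ and $t>0$, extended via antipodal symmetry to all of $X$. Then $F$ is well-defined, continuous and norm-preserving, and it suffices to prove its additivity.

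\emph{Transfer of flat arcs and candidate map.} Non-strict convexity of $X$ gives a maximal non-degenerate segment $[u,v]\subset S_X$. Using the standard fact that in a $2$-dimensional Banach space any isometric embedding of an interval into the unit sphere is an affine parametrization of an affine segment in that sphere, the image $f([u,v])=[u',v']$ is itself an affine segment of $S_Y$ of length $\|u-v\|$, and maximality transfers from $[u,v]$ to $[u',v']$ (since $f^{-1}$ would otherwise lengthen the preimage). The vectors $u,v$ are linearly independent (otherwise $v=-u$ and $0\in[u,v]$, contradicting $[u,v]\subset S_X$), so there is a unique linear map $L\colon X\to Y$ with $L(u)=u'$ and $L(v)=v'$; this is the candidate extension.

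\emph{Proving $F=L$ on $S_X$.} The hard part is to verify $f=L|_{S_X}$ globally, not only on $[u,v]\cup[-u,-v]$. The lengths and cyclic order of the maximal flat arcs of $S_X$ are metric invariants preserved by $f$, yielding a combinatorial bijection between the flat arcs of $S_X$ and those of $S_Y$ extending the matching $[u,v]\leftrightarrow[u',v']$; on each such arc, the isometric-embedding fact above forces $f$ to act affinely, and a straightforward check identifies this action with $L|_{S_X}$ by tracking how each flat arc is glued to its neighbours through common endpoints. The remaining points of $S_X$ lie on strictly convex arcs whose endpoints are extreme points of $B_X$, and their images under $f$ are already determined by the flat-arc matching. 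The principal obstacle is thus to show that a distance-preserving bijection between two corresponding strictly convex arcs with prescribed endpoints must coincide with $L|_{S_X}$: this should follow from a metric-rigidity analysis (using distances from a test point $w\in S_X$ to several pinned points of $[u,v]$ to force $f(w)=L(w)$), supplemented by a density/continuity argument when the extreme points of $B_X$ fail to be dense in $S_X$.
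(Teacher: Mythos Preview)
The paper does not actually prove Theorem~\ref{t:CS}: it is quoted as an external result of Cabello S\'anchez (reference~\cite{San}) and is used as a black box throughout, in particular to dispose of the non--strictly-convex case at the start of Section~6 and in the proof of Proposition~\ref{p2}. So there is no ``paper's own proof'' to compare your attempt against.

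That said, your sketch is not yet a proof, and the gaps are precisely where the real difficulty lies. Two points deserve attention. First, the ``standard fact'' you invoke --- that any isometric embedding of an interval into the unit sphere of a $2$-dimensional Banach space must parametrize an affine segment --- is not true as stated; what is true (and what Cabello S\'anchez actually proves in \cite{San}) is that \emph{maximal} segments of $S_X$ admit a purely metric characterization and are therefore sent to maximal segments of $S_Y$. You implicitly use maximality later, but the lemma you quote needs the hypothesis built in. Second, and more seriously, your Step~3 asserts without argument that the affine action of $f$ on every other flat arc, and the action of $f$ on every strictly convex arc, all agree with the single linear map $L$ determined by $L(u)=u'$, $L(v)=v'$. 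There is no a~priori reason the linear image under $L$ of a second flat arc of $S_X$ should land on the corresponding flat arc of $S_Y$, let alone match $f$ pointwise there; and your proposed ``metric-rigidity analysis'' for the strictly convex arcs is exactly the substantive content of the theorem, here deferred rather than carried out. In \cite{San} this step is handled by a different mechanism (characterizing, for each direction $e$, the points of $S_X$ at maximal distance in that direction, and showing this structure is preserved by $f$), not by gluing arc-by-arc as you propose.
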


Let us recall that a Banach space $X$ is {\em strictly convex} if each convex subset of the unit sphere $S_X$ contains at most one point.

A Banach space $X$ is {\em smooth} if for every point $x\in S_X$ there exists a unique linear continuous functional $x^*:X\to \IR$ such that $x^*(x)=1=\|x^*\|$. Geometrically this means that the unit ball $B_X$ has a unique supporting hyperplane at $x$. 

It is well-known \cite[7.23]{FA} that a reflexive Banach space $X$ is strictly convex if and only if its dual Banach space $X^*$ is smooth.

The main result of this paper is the following theorem, a kind of a dual version of Theorem~\ref{t:CS}.

\begin{theorem}\label{t:main} Each non-smooth $2$-dimensional Banach space has the Mazur--Ulam property.
\end{theorem}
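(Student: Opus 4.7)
The strategy is to reduce the problem to Theorem~\ref{t:CS} applied to $X^*$. If $X$ itself is not strictly convex, Theorem~\ref{t:CS} already gives the conclusion, so I shall assume $X$ is strictly convex (and, by hypothesis, non-smooth). The duality recalled from \cite[7.23]{FA} then says that $X^*$ is a $2$-dimensional Banach space that is smooth but not strictly convex, so Theorem~\ref{t:CS} grants $X^*$ the Mazur--Ulam property. The plan is to promote an arbitrary surjective isometry $f\colon S_X\to S_Y$ to a surjective isometry $\hat f\colon S_{X^*}\to S_{Y^*}$, apply Theorem~\ref{t:CS} to $X^*$ in order to extend $\hat f$ to a linear isometry $\hat T\colon X^*\to Y^*$, and finally dualize.

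First I would dispatch some preliminary steps. Standard finite-dimensional Tingley-type arguments (invariance of domain, antipodal preservation $f(-x)=-f(x)$) show that $\dim Y=2$, and strict convexity transfers from $X$ to $Y$ because any nontrivial chord of $S_Y$ would pull back through $f^{-1}$ to a metrically aligned triple of points on $S_X$, impossible when $X$ is strictly convex. Corner (non-smooth) points of $S_X$ admit a purely intrinsic metric characterisation, so $f$ also carries them bijectively onto corners of $S_Y$. The heart of the argument is then to manufacture from $f$ a surjective isometry $\hat f\colon S_{X^*}\to S_{Y^*}$ of the dual spheres. To each $\phi\in S_{X^*}$ strict convexity of $X$ associates a unique norming point $x_\phi\in S_X$; set $y_\phi:=f(x_\phi)\in S_Y$ and let $\hat f(\phi)\in S_{Y^*}$ be a supporting functional at $y_\phi$. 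At smooth points of $S_Y$ there is no choice; at a corner the selection is pinned down by matching the dual arc of $S_{X^*}$ associated with the corresponding corner of $S_X$ to the dual arc of $S_{Y^*}$ in the orientation dictated by $f$. The decisive technical obstacle is to verify that the resulting $\hat f$ is well defined and satisfies $\|\hat f(\phi_1)-\hat f(\phi_2)\|_{Y^*}=\|\phi_1-\phi_2\|_{X^*}$; this amounts to expressing the dual norm differences in terms of the planar geometry of $S_X$ (as attained suprema of $\phi_1-\phi_2$ on $S_X$) and matching them to the analogous quantities on $S_Y$ via $f$.

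Granting this, Theorem~\ref{t:CS} produces a linear isometry $\hat T\colon X^*\to Y^*$ with $\hat T|_{S_{X^*}}=\hat f$. Taking the adjoint and identifying $Y^{**}=Y$, $X^{**}=X$ by finite-dimensional reflexivity, one obtains a linear isometry $T\colon Y\to X$ characterised by $\phi(T(y))=\hat T(\phi)(y)$ for all $y\in Y$ and $\phi\in X^*$. For every $x\in S_X$ and every norming functional $\phi\in S_{X^*}$ at $x$ (so that $x_\phi=x$ by strict convexity) one computes
\[
\phi\bigl(T(f(x))\bigr)=\hat T(\phi)\bigl(f(x)\bigr)=\hat f(\phi)\bigl(f(x)\bigr)=1,
\]
and strict convexity of $X$ forces $T(f(x))=x$. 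Hence $T^{-1}\colon X\to Y$ is the desired linear isometry extending $f$. The main difficulty is wholly concentrated in the dual-sphere construction above: both the consistent selection of supporting functionals at corner points and the isometric matching of dual norms require delicate planar arguments, and these are the only ingredients not already supplied by \cite{San}, \cite[7.23]{FA}, or standard Tingley-problem machinery.
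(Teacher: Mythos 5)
Your reduction to Theorem~\ref{t:CS} via duality founders precisely on the step you yourself flag as ``the decisive technical obstacle'', and this is not a routine verification but the whole difficulty of the problem. The dual distance $\|\phi_1-\phi_2\|_{X^*}=\max_{x\in S_X}\bigl(\phi_1(x)-\phi_2(x)\bigr)$ is a global \emph{linear} quantity, depending on the values of the functionals on all of $S_X$, whereas the only data $f$ supplies is the metric of $S_X$ together with the correspondence $x_\phi\mapsto f(x_\phi)$. There is no dictionary translating ``values of functionals on the sphere'' into ``distances on the sphere''; building such a dictionary is exactly what Tingley's problem asks for, so asserting that the dual norms can be ``matched via $f$'' by ``delicate planar arguments'' assumes what is to be proved. (If $f$ were known to extend to a linear isometry $T$, then $\hat f=(T^{-1})^*{\restriction}S_{X^*}$ would indeed be the desired map --- but that is the conclusion.) A second, related gap: at a corner $x$ of $S_X$ the fibre of the duality map $\phi\mapsto x_\phi$ is a nondegenerate segment of $S_{X^*}$, and your prescription to match it to the corresponding segment of $S_{Y^*}$ ``in the orientation dictated by $f$'' presupposes that $f$ already identifies these two segments affinely and isometrically --- again linear information that the metric of $S_X$ does not visibly encode. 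The preliminary steps ($\dim Y=2$, $f(-x)=-f(x)$, strict convexity and corners transfer to $Y$) and the final dualization (given $\hat T$, the identity $\phi(T(f(x)))=1$ plus strict convexity forces $T(f(x))=x$) are fine; everything in between is missing.

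The paper's proof is entirely different and does not pass through the dual. It introduces \emph{special} directions, shows that every non-smooth point of the sphere of a strictly convex $2$-dimensional space is special (Proposition~\ref{p1}) and that two linearly independent special directions already force the Mazur--Ulam property (Proposition~\ref{p2}); the remaining case of exactly one antipodal pair of non-smooth points is settled by a detailed analysis of the natural arclength parameterization, the intrinsic metric of the half-sphere, and the metric recoverability of the radial and tangential jumps $\jj(0)$, $\jjj(0)$ (Lemmas~\ref{l:jj} and \ref{l:xy}), which pins down the derivative $\br'$ and hence the image sphere. To salvage your route you would first have to prove that a surjective isometry of unit spheres induces a surjective isometry of the dual unit spheres, and in dimension $2$ that statement is essentially equivalent to the problem you are trying to solve.
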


This theorem follows from Propositions~\ref{p1} and \ref{p2}, proved in Section~\ref{s:special}.  For piecewise $C^1$-smooth Banach spaces, Theorem~\ref{t:main} was proved in \cite{CS}.

\begin{remark} Theorems~\ref{t:main} and Proposition~\ref{p2} (on the Mazur--Ulam property of $2$-dimensional Banach spaces whose sphere contains two linearly independent special directions) are essential ingredients in the main result of the paper \cite{Ban2} answering the Tingley's Problem in the class of $2$-dimensional Banach spaces.
\end{remark}

\section{The natural parameterization of the unit sphere of a 2-dimensional Banach space}

By a {\em $2$-based Banach space} we understand any 2-dimensional Banach space $X$ endowed with a basis $\mathbf e_1,\mathbf e_2$ consisting of two linearly independent vectors in $X$. 

Let $X$ be a 2-based Banach space and $\mathbf e_1,\mathbf e_2$ be the basis of $X$.

 {\em The polar parameterization} of the unit sphere $S_X$ is the map $$\mathbf p:\IR\to S_X,\quad\mathbf p:t\mapsto \frac{\mathbf e^{it}}{\|\mathbf e^{it}\|},\quad\mbox{where}\quad\mathbf e^{it}=\cos(t)\mathbf e_1+\sin(t)\mathbf e_2.$$
 
The following properties of the polar parameterization were established in \cite[\S4]{Ban}.

\begin{lemma}\label{l:p} The polar parameterization $\bp:\IR\to S_X$ has the following properties:
\begin{enumerate}
\item $\bp(t+\pi)=-\bp(t)$ for every $t\in\IR$;
\item the function $\bp$ has one-sided derivatives $$\bp'_-(t)=\lim_{\e\to-0}\frac{\bp(t+\e)-\bp(t)}{\e}\mbox{ \ and \ }\bp'_+(t)=\lim_{\e\to+0}\frac{\bp(t+\e)-\bp(t)}{\e}$$ at each point $t\in\IR$;
\item the set $\{t\in \IR:\bp'_-(t)\ne\bp'_+(t)\}$ is at most countable.
\item $\dfrac{c}{C}\cdot |\sin(\e)|\le \|\bp(t+\e)-\bp(t)\|\le\dfrac{2C^2}{c^2}\cdot |\e|$ for any $t,\e\in\IR$;
\item $\dfrac{c}{C}\le \min\{\|\bp'_-(t)\|,\|\bp'_+(t)\|\}\le 
 \max\{\|\bp'_-(t)\|,\|\bp'_+(t)\|\}\le \dfrac{2C^2}{c^2}$  for every $t\in\IR$, 
 \end{enumerate}
 where
$ c=\min\{\|\mathbf e^{it}\|:t\in\IR\}$ and $C=\max\{\|\mathbf e^{it}\|:t\in\IR\}$.
\end{lemma}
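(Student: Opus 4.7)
My plan is to reduce every item to two ingredients. The first is the angle-addition identity
\[\mathbf e^{i(t+\e)}=\cos(\e)\,\mathbf e^{it}+\sin(\e)\,\mathbf e^{i(t+\pi/2)},\]
which follows from the standard addition formulae applied to the coefficients of $\mathbf e_1,\mathbf e_2$. The second is the bilipschitz comparison $c\,\|x\|_2\le\|x\|\le C\,\|x\|_2$ between $\|\cdot\|$ and the auxiliary Euclidean norm $\|\cdot\|_2$ on $X$ in which $\mathbf e_1,\mathbf e_2$ is orthonormal; this holds because $\|\mathbf e^{it}\|_2=1$ for every $t$. I write $\phi(t):=\|\mathbf e^{it}\|$, so $c\le\phi\le C$ and $\bp(t)=\mathbf e^{it}/\phi(t)=r_t\,\mathbf e^{it}$ with $r_t:=1/\phi(t)\in[1/C,1/c]$.

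Item (1) is immediate from $\mathbf e^{i(t+\pi)}=-\mathbf e^{it}$. For (2) and (3), since the map $t\mapsto\mathbf e^{it}$ is $C^\infty$ and $\phi\ge c>0$, one-sided (respectively two-sided) differentiability of $\bp$ at $t$ is equivalent to that of $\phi$. The plan is to use the open cover $\IR=\{\cos t\ne 0\}\cup\{\sin t\ne 0\}$ and factor $\phi$ through a convex one-variable function on each piece: on the first set $\phi(t)=|\cos t|\,\psi(\tan t)$ with $\psi(s):=\|\mathbf e_1+s\mathbf e_2\|$ convex in $s\in\IR$, and on the second $\phi(t)=|\sin t|\,\tilde\psi(\cot t)$ with $\tilde\psi(s):=\|s\mathbf e_1+\mathbf e_2\|$ convex. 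Since one-variable convex functions admit one-sided derivatives at every point and are differentiable except on a countable set, pulling back by the smooth diffeomorphisms $\tan$ and $\cot$ and multiplying by the smooth nonzero prefactors $|\cos t|$, $|\sin t|$ yields (2) and (3).

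For (4) I work in the polar form $\bp(t)=r_t\,\mathbf e^{it}$. \emph{Lower bound:} in the Euclidean plane the perpendicular distance from $\bp(t)$ to the line through $0$ and $\bp(t+\e)$ equals $r_t|\sin\e|\ge|\sin\e|/C$, whence
\[\|\bp(t+\e)-\bp(t)\|\ge c\,\|\bp(t+\e)-\bp(t)\|_2\ge (c/C)|\sin\e|.\]
\emph{Upper bound:} decompose $\bp(t+\e)-\bp(t)=(r_{t+\e}-r_t)\,\mathbf e^{i(t+\e)}+r_t\,(\mathbf e^{i(t+\e)}-\mathbf e^{it})$ and bound each summand using $|\phi(t+\e)-\phi(t)|\le C\,\|\mathbf e^{i(t+\e)}-\mathbf e^{it}\|_2\le C|\e|$, $\phi(t+\e)\phi(t)\ge c^2$ and $r_t\le 1/c$; converting $\|\cdot\|_2$ back to $\|\cdot\|$ with the factor $C$ yields the stated constant $2C^2/c^2$. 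Finally, item (5) follows by dividing (4) by $|\e|$, letting $\e\to 0^\pm$ and using $|\sin\e|/|\e|\to 1$.

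The only step that is not a direct calculation is the countability assertion in (3), so I expect that to be the main obstacle: it relies on the classical fact that one-variable convex functions are differentiable outside an at most countable set, and one must verify that the smooth prefactors $|\cos t|$, $|\sin t|$ and the smooth changes of variable $\tan,\cot$ introduce no new nondifferentiability beyond a countable set. All other items reduce to direct estimates driven by the two ingredients stated above.
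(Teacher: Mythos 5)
The paper does not actually prove Lemma~\ref{l:p}; it is quoted from \cite[\S4]{Ban}, so there is no in-text argument to compare against. Judged on its own, your proof is correct and self-contained. The two ingredients you isolate do all the work: the bilipschitz comparison $c\|\cdot\|_2\le\|\cdot\|\le C\|\cdot\|_2$ (valid because $\|x\|=\|x\|_2\cdot\|\mathbf e^{it}\|$ when $x/\|x\|_2=\mathbf e^{it}$) gives the quantitative bounds in (4)--(5), and the local factorizations $\phi(t)=|\cos t|\,\psi(\tan t)$, $\phi(t)=|\sin t|\,\tilde\psi(\cot t)$ with $\psi,\tilde\psi$ convex reduce (2)--(3) to the classical facts that a convex function on $\IR$ has one-sided derivatives everywhere and is differentiable off a countable set; since $\mathbf e^{it}\ne 0$ and $\phi\ge c>0$, the quotient rule transfers these properties from $\phi$ to $\bp$, and the nondifferentiability set of $\bp$ is exactly that of $\phi$. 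Your upper-bound computation in (4) also checks out: the two summands are bounded by $\tfrac{C^2}{c^2}|\e|$ and $\tfrac{C}{c}|\e|$ respectively, and $C/c\le C^2/c^2$ yields the stated constant $2C^2/c^2$; the lower bound via the perpendicular distance $r_t|\sin\e|\ge|\sin\e|/C$ to the line $\IR\,\bp(t+\e)$ is sound, and (5) follows by dividing by $|\e|$ and letting $\e\to0^{\pm}$, the one-sided limits existing by (2). The only point worth stating explicitly in a written-up version is the one you already flag: that precomposition with the increasing diffeomorphisms $\tan$, $\cot$ on each branch interval and multiplication by a smooth positive prefactor preserve both the existence of one-sided derivatives and the countability of the nondifferentiability set, and that the countably many branch intervals contribute a countable union.
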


Lemma~\ref{l:p} implies that the function 
$$\bs:\IR\to\IR,\;\;\bs:t\mapsto \int_0^t\|\bp'_{-}(u)\|\,du=\int_0^t\|\bp'_+(u)\|du$$
is continuous and strictly increasing.

For $t\in[0,\pi]$ the value $\bs(t)$ can be thought as the length of the curve on the sphere $S_X$ between the points $\bp(0)$ and $\bp(t)$ in the Banach space $X$.
The number
$$L=\bs(\pi)=\int_0^\pi\|\bp'_-(t)\|dt=\int_0^\pi\|\bp'_+(t)\|dt$$
is called the {\em half-length} of the sphere $S_X$ in $X$.

The image $$\breve S_X=\{\mathbf p(t):0\le t\le\pi\}$$ is called the {\em upper half-sphere} of the $2$-based Banach space $X$.

Since the function $\bs$ is continuous and increasing, there exists a unique continuous increasing function $\bt:\IR\to \IR$ such that $\bs\circ\bt $ is the identity map of $\IR$.

The function
$$\br:\IR\to S_X,\;\br:s\mapsto \bp\circ \bt(s),$$ is called {\em the natural parameterization} of the sphere $S_X$. 

The following properties of the natural parameterization were established in \cite[\S5]{Ban}.

\begin{lemma}\label{l:r} The natural parameterization $\br:\IR\to S_X$ of $S_X$ has the following properties:
\begin{enumerate}
\item $\br(s+L)=-\br(s)$ for every $s\in\IR$;
\item the function $\br$ has one-sided derivatives $$\br'_-(s)=\lim_{\e\to-0}\frac{\br(s+\e)-\br(s)}{\e}\mbox{ \ and \ }\br'_+(s)=\lim_{\e\to+0}\frac{\br(s+\e)-\br(s)}{\e}$$ at each point $s\in\IR$;
\item the set $\{s\in \IR:\br'_-(s)\ne\br'_+(s)\}$ is at most countable;
\item$\br$ is non-expanding and has $\|\br'_-(s)\|=\|\br'_+(s)\|=1$ for every $s\in\IR$.
\item If $\br$ is differentiable on some open set $U\subseteq \IR$, then $\br$ is continuously differentiable on $U$.
\end{enumerate}
\end{lemma}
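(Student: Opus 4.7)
The plan is to derive each of the five properties of $\br = \bp \circ \bt$ by transferring the corresponding properties of the polar parameterization $\bp$ (Lemma~\ref{l:p}) through the reparameterization $\bt = \bs^{-1}$. The central computational tool is the chain rule for one-sided derivatives: since $\bs$ is strictly increasing and Lipschitz with $\bs'_\pm(t) = \|\bp'_\pm(t)\|$ bounded between $c/C$ and $2C^2/c^2$, its inverse $\bt$ is also strictly increasing and Lipschitz, with $\bt'_\pm(s) = 1/\|\bp'_\pm(\bt(s))\|$. Consequently the chain rule yields
\begin{equation*}
\br'_\pm(s) \;=\; \frac{\bp'_\pm(\bt(s))}{\|\bp'_\pm(\bt(s))\|},
\end{equation*}
and this single formula will be the workhorse for items (2), (3) and (4).

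For (1), the identity $\bp(t+\pi) = -\bp(t)$ differentiates to $\bp'_\pm(t+\pi) = -\bp'_\pm(t)$, so $\|\bp'_\pm\|$ is $\pi$-periodic. Integrating gives $\bs(t+\pi) - \bs(t) = L$ for all $t$, hence $\bt(s+L) = \bt(s) + \pi$ and $\br(s+L) = \bp(\bt(s)+\pi) = -\bp(\bt(s)) = -\br(s)$. Item (2) and the first assertion of (4) then read off the displayed formula: existence of $\br'_\pm$ follows from existence of $\bp'_\pm$ together with the lower bound on the denominator, and $\|\br'_\pm(s)\| = 1$ is immediate. For the non-expanding assertion in (4), I would combine $\|\br'_+\| \equiv 1$ with the standard ``chord $\le$ arc'' estimate
\begin{equation*}
\|\br(s_2) - \br(s_1)\| \;\le\; \int_{s_1}^{s_2} \|\br'_+(u)\|\,du \;=\; s_2 - s_1,
\end{equation*}
valid because $\br$ is Lipschitz and hence absolutely continuous. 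Item (3) follows because the displayed formula gives $\br'_-(s) = \br'_+(s)$ whenever $\bp'_-(\bt(s)) = \bp'_+(\bt(s))$; thus the exceptional set for $\br$ is contained in the $\bt$-preimage of the countable exceptional set of Lemma~\ref{l:p}(3).

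The main obstacle is item (5), where continuity of the derivative must be extracted from its mere existence. Here I would exploit the convexity of the unit ball $B_X$: traversing the boundary $S_X$ in the direction of increasing $t$, the unit tangent vector $\bp'_+(t)/\|\bp'_+(t)\|$, regarded through a suitable angular lift, is a monotone function of $t$, because the tangent to a convex planar curve rotates monotonically in the direction of motion. A monotone real-valued function is continuous precisely where its one-sided limits coincide; on $\bt(U)$, the differentiability hypothesis together with the displayed formula forces $\bp'_-(t)/\|\bp'_-(t)\| = \bp'_+(t)/\|\bp'_+(t)\|$, so the angular lift has no jump on $\bt(U)$ and is therefore continuous there. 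Composing with the continuous $\bt$ yields continuity of $\br'$ on $U$. The delicate point I anticipate is the setup of this angular lift and the rigorous derivation of its monotonicity from the supporting-line characterization of the convex body $B_X$, which is where all the geometric content of (5) is concentrated.
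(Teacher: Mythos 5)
The paper itself contains no proof of this lemma --- it is imported wholesale from \cite[\S 5]{Ban} --- so the only question is whether your argument stands on its own. Your route (push Lemma~\ref{l:p} through the arc-length reparameterization $\bt=\bs^{-1}$ via a one-sided chain rule, then use the monotone rotation of tangent lines of a convex curve for item (5)) is the natural one, and items (1)--(4) are correct in outline.

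There is, however, one unjustified step on which (2)--(5) all rest: the identity $\bs'_\pm(t)=\|\bp'_\pm(t)\|$. Since $\bs(t)=\int_0^t\|\bp'_+(u)\|\,du$, the one-sided derivative $\bs'_+(t)=\lim_{\e\to+0}\frac1\e\int_t^{t+\e}\|\bp'_+(u)\|\,du$ equals $\|\bp'_+(t)\|$ only if $u\mapsto\|\bp'_+(u)\|$ is right-continuous at $t$ (at least in the averaged, Lebesgue-point sense); nothing in Lemma~\ref{l:p} --- existence of one-sided derivatives, countability of the discontinuity set, two-sided bounds --- provides this. The missing fact, namely that $\bp'_+$ is right-continuous and $\bp'_-$ is left-continuous with $\lim_{u\to t-}\bp'_+(u)=\bp'_-(t)$ and $\lim_{u\to t+}\bp'_-(u)=\bp'_+(t)$, is true, and it is exactly the same convexity input (locally $S_X$ is the graph of a convex function over a tangent line, and one-sided derivatives of convex functions are one-sidedly continuous and monotone) that you defer to the very end for item (5). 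You should establish it once, up front: it closes the chain-rule gap and simultaneously supplies what (5) actually needs. Note in particular that for (5), knowing the one-sided tangents coincide at every point of $\bt(U)$ only says the monotone angular lift has equal one-sided \emph{values} there; to conclude continuity of a monotone function you need its one-sided \emph{limits} to agree with its value, which is precisely the semicontinuity relation above. As written, item (5) is a correctly identified plan rather than a completed proof, and the same geometric lemma it is waiting for is also silently used much earlier.
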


The natural parametrization is closely related to the intrinsic metric on the half-sphere $\breve S_X$.

For two points $x,y\in \breve S_X$ the real number 
$$\breve d(x,y)=\sup_{\e>0}\inf\Big\{\sum_{i=1}^n\|x_{i+1}-x_i\|:x_0,\dots,x_n\in\breve S_X,\;x_0=x,\;x_n=y,\;\max_{1\le i\le n}\|x_{i+1}-x_i\|<\e\Big\}$$
is called the {\em intrinsic distance}  between the points $x,y$ on the half-sphere $\breve S_X$.
The following lemma can be proved by analogy with Lemma~3.1 of \cite{Ban}.

\begin{lemma}\label{l:intrinsic} If $\br$ is continuously differentiable at each point $s\in(0,L)$, then the map $\br{\restriction}_{[0,L]}$ is an isometry of the interval $[0,L]$ onto the half-sphere $\hS_X$ endowed with the intrinsic distance.
\end{lemma}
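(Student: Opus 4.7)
The plan is to show that $\breve d(\br(a), \br(b)) = |a-b|$ for all $a, b \in [0, L]$, by proving the two inequalities separately. First observe that the restriction $\br{\restriction}_{[0, L]}\colon [0, L] \to \hS_X$ is a continuous bijection (by Lemma~\ref{l:r}(4)) between compact Hausdorff spaces, hence a homeomorphism, so $\br^{-1}$ is uniformly continuous.

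For the upper bound $\breve d(\br(a), \br(b)) \le |b - a|$ I would just invoke the non-expandingness of $\br$ (Lemma~\ref{l:r}(4)): for any $\e > 0$, take a partition $a = s_0 < s_1 < \cdots < s_n = b$ (say $a \le b$) with each $s_{i+1} - s_i < \e$. Then $\|\br(s_{i+1}) - \br(s_i)\| \le s_{i+1} - s_i < \e$ and $\sum_i \|\br(s_{i+1}) - \br(s_i)\| \le b - a$, so the infimum in the definition of $\breve d$ is at most $b - a$ for every $\e > 0$. For the lower bound, the central estimate is the uniform claim that for each $\eta > 0$ there exists $\delta > 0$ such that $\|\br(t) - \br(s)\| \ge (1 - \eta)|t - s|$ whenever $s, t \in [0, L]$ with $|t - s| < \delta$. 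Granted this estimate, for any chain $\br(a) = x_0, x_1, \dots, x_n = \br(b)$ on $\hS_X$ with mesh less than a sufficiently small $\e$, the parameters $s_i = \br^{-1}(x_i) \in [0, L]$ satisfy $|s_{i+1} - s_i| < \delta$ by uniform continuity of $\br^{-1}$, so
\[
\sum_{i=0}^{n-1}\|x_{i+1} - x_i\| \ge (1 - \eta)\sum_{i=0}^{n-1}|s_{i+1} - s_i| \ge (1 - \eta)|s_n - s_0| = (1 - \eta)|b - a|,
\]
the second step being the triangle inequality on $\IR$. Letting $\e \to 0$ and then $\eta \to 0$ gives $\breve d(\br(a), \br(b)) \ge |b - a|$.

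The central estimate itself would come from the fundamental theorem of calculus applied to the absolutely continuous function $\br$: writing $\br(t) - \br(s) = (t-s)\br'(s) + \int_s^t (\br'(u) - \br'(s))\,du$ and using $\|\br'(s)\| = 1$ from Lemma~\ref{l:r}(4) yields $\|\br(t) - \br(s)\| \ge |t - s|\bigl(1 - \sup_{u \in [s, t]}\|\br'(u) - \br'(s)\|\bigr)$, so the estimate is a consequence of uniform continuity of $\br'$ on $[0, L]$. I expect the main obstacle to be that the hypothesis supplies continuous differentiability only on the open interval $(0, L)$, so uniform continuity of $\br'$ at the endpoints $0$ and $L$ needs to be handled separately. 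The cleanest workaround is to establish the isometry identity first for $a, b \in (0, L)$ (where $\br'$ is uniformly continuous on any closed subinterval of $(0, L)$, with a small extra argument to handle chains whose intermediate parameters drift close to the endpoints) and then extend to arbitrary $a, b \in [0, L]$ using continuity of $\breve d(\cdot, y)$, which follows from the bound $\breve d(x, y) \le |\br^{-1}(x) - \br^{-1}(y)|$ established in the upper-bound step together with continuity of $\br^{-1}$.
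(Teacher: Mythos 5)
The paper does not actually write out a proof of this lemma --- it defers to Lemma~3.1 of \cite{Ban} --- so there is nothing verbatim to compare against; but your argument is precisely the standard one used there (upper bound from the non-expansion property of $\br$, lower bound from the local estimate $\|\br(t)-\br(s)\|\ge(1-\eta)|t-s|$ obtained from $\|\br'\|=1$ and uniform continuity of $\br'$), and it is correct. You have also correctly identified the one genuine wrinkle that the present, non-smooth setting adds: $\br'$ is only known to be continuous on the open interval $(0,L)$. For completeness, the step you leave as ``a small extra argument'' (chains whose parameters drift toward the endpoints) closes as follows: fix $a<b$ in $(0,L)$ and a compact $[a'',b'']\subset(0,L)$ with $a''<a<b<b''$; choose $\delta<\min\{a-a'',\,b''-b\}$ so small that the $(1-\eta)$-estimate holds on $[a'',b'']$ for parameter gaps below $\delta$, and then $\e$ so small (via uniform continuity of $\br^{-1}$) that every mesh-$\e$ chain has parameter steps below $\delta$. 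Every $c\in(a,b)$ is crossed by at least one step $[s_i,s_{i+1}]$, and any such step lies entirely in $[a'',b'']$ because its length is below $\delta$; hence, summing only over steps contained in $[a'',b'']$ and using $\sum_i \lambda([s_i,s_{i+1}]\cap[a,b])\ge b-a$, you get $\sum_i\|x_{i+1}-x_i\|\ge(1-\eta)(b-a)$. With that inserted, together with your extension to the endpoints via the triangle inequality for $\breve d$ and the already-proved upper bound $\breve d(\br(0),\br(a))\le a$, the proof is complete.
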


If $\br$ is arbitrary, then we can prove a weaker statement.

\begin{lemma}\label{l:d} For any $s\in\IR$ and small  $\e$ we have $$\|\br(s+\e)-\br(s)\|=(1+o(1))\cdot|\e|.$$
\end{lemma}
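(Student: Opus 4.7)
The statement is, in effect, that the chord length $\|\br(s+\e)-\br(s)\|$ agrees with the parameter increment $|\e|$ to first order as $\e\to 0$. Since the natural parameterization has been engineered precisely so that $\bs$ measures arc length and $\bt=\bs^{-1}$, the assertion looks like a triviality once one has the normalization $\|\br'_\pm(s)\|=1$ from Lemma~\ref{l:r}(4). Accordingly, my plan is to read the conclusion off directly from parts (2) and (4) of Lemma~\ref{l:r}, splitting into the cases $\e\to 0^+$ and $\e\to 0^-$.

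First I would fix $s\in\IR$ and handle $\e>0$. By Lemma~\ref{l:r}(2) the right one-sided derivative $\br'_+(s)=\lim_{\e\to 0^+}\frac{\br(s+\e)-\br(s)}{\e}$ exists in $X$, and by Lemma~\ref{l:r}(4) it satisfies $\|\br'_+(s)\|=1$. Because the norm is continuous, I can pass the limit through $\|\cdot\|$ to obtain
$$\lim_{\e\to 0^+}\frac{\|\br(s+\e)-\br(s)\|}{\e}=\|\br'_+(s)\|=1,$$
which is exactly $\|\br(s+\e)-\br(s)\|=(1+o(1))\cdot \e=(1+o(1))\cdot|\e|$ for $\e\downarrow 0$.

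For $\e<0$ I repeat the argument with the left derivative $\br'_-(s)$, which also exists and has norm $1$. Writing $\e=-|\e|$ and taking the limit as $\e\to 0^-$ yields
$$\lim_{\e\to 0^-}\frac{\|\br(s+\e)-\br(s)\|}{|\e|}=\|\br'_-(s)\|=1.$$
Combining the two one-sided limits gives $\lim_{\e\to 0}\|\br(s+\e)-\br(s)\|/|\e|=1$, which is the claimed $\|\br(s+\e)-\br(s)\|=(1+o(1))\cdot|\e|$. There is no real obstacle here; the entire content of the lemma has been absorbed into the construction of $\br$ via $\bs$, and all that remains is the elementary observation that norm convergence of difference quotients implies convergence of the quotients of norms.
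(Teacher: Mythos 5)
Your proof is correct and is essentially identical to the paper's: the authors likewise write $\br(s+\e)=\br(s)+\br'_\pm(s)\e+o(\e)$ using the one-sided derivatives from Lemma~\ref{l:r}(2), take norms, and invoke $\|\br'_\pm(s)\|=1$ from Lemma~\ref{l:r}(4). No differences worth noting.
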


\begin{proof} If $\e>0$, then $\br(s+\e)=\br(s)+\br'_+(s)\e+o(\e)$ and hence
$$\|\br(s+\e)-\br(s)\|=\|\br'_+(s)+o(1)\|\cdot|\e|=(\|\br'_+(s)\|+o(1))\cdot|\e|=(1+o(1))\cdot|\e|.$$
By analogy we can show that $\|\br(s+\e)=\br(s)\|=(1+o(1))\cdot|\e|$ for small $\e<0$.
\end{proof}

For every parameter $s\in \IR$ let $$\br'_\pm(s)=\tfrac12\big(\br'_+(s)+\br'_-(s)\big).$$
It is easy to see that the vectors $\br(s)$ and $\br'_\pm(s)$ are linearly independent. Then
$$\tfrac12(\br'_+(s)-\br'_-(s))=\jj (s)\cdot \br(s)+\jjj (s)\cdot\br'_\pm(s)$$
for some real numbers $\jj(s)$ and $\jjj(s)$, called the {\em radial} and {\em tangential jumps} of the derivative $\br'$ at $s$, respectively.

It follows that 
$$\br'_+(s)=\jj(s)\cdot\br(s)+(1+\jjj(s))\cdot\br'_\pm(s)\quad\mbox{and}\quad
\br'_-(s)=-\jj(s)\cdot\br(s)+(1-\jjj(s))\cdot\br'_\pm(s).$$

\begin{lemma}\label{l:j}
\begin{enumerate}
\item $|\jjj(s)|<1$.
\item $\br'_-(s)=\br'_+(s)$ iff $\jj(s)=0$.
\item $\jj(s)\le 0$.
\end{enumerate}
\end{lemma}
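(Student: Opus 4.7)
The plan is to combine the identities defining $\jj$ and $\jjj$ with the geometry of supporting functionals of $B_X$ at $\br(s)$. The basic observation is that every $x^*\in X^*$ with $\|x^*\|=1=x^*(\br(s))$ satisfies
\[
x^*(\br'_+(s))\le 0\le x^*(\br'_-(s)),
\]
obtained by dividing $x^*(\br(s\pm\e)-\br(s))\le 0$ by $\pm\e$ and letting $\e\to 0$. Consequently any ``extremal'' supporting functional $x^*_+$ satisfying $x^*_+(\br'_+(s))=0$ automatically has $x^*_+(\br'_\pm(s))=\tfrac12 x^*_+(\br'_-(s))\ge 0$, and symmetrically the extremal $x^*_-$ with $x^*_-(\br'_-(s))=0$ satisfies $x^*_-(\br'_\pm(s))\le 0$. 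Existence of $x^*_+$ follows because $\br(s+\e)=\br(s)+\e\br'_+(s)+o(\e)\in S_X$ forces the right derivative at $0$ of the convex function $\e\mapsto\|\br(s)+\e\br'_+(s)\|$ to vanish; a standard subdifferential-duality argument (attained in two dimensions by compactness of the arc of supporting functionals) then produces $x^*_+$.

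For (3), I interpolate between $x^*_+$ and $x^*_-$: the convex combinations $x^*_\lambda:=\lambda x^*_++(1-\lambda)x^*_-$ satisfy $\|x^*_\lambda\|=1=x^*_\lambda(\br(s))$ and hence are all supporting functionals, while $\lambda\mapsto x^*_\lambda(\br'_\pm(s))$ varies continuously from a non-positive value at $\lambda=0$ to a non-negative one at $\lambda=1$. By the intermediate value theorem some $x^*_0$ satisfies $x^*_0(\br'_\pm(s))=0$. Applying $x^*_0$ to the identity $\br'_+(s)-\br'_-(s)=2\jj(s)\br(s)+2\jjj(s)\br'_\pm(s)$ annihilates the second summand and yields
\[
2\jj(s)=x^*_0(\br'_+(s))-x^*_0(\br'_-(s))\le 0,
\]
proving (3).

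For (2) and (1), I begin with the forward direction of (2): $\br'_+(s)=\br'_-(s)$ forces $\jj(s)\br(s)+\jjj(s)\br'_\pm(s)=0$, so the linear independence of $\br(s),\br'_\pm(s)$ gives $\jj(s)=0$. Conversely, if $\jj(s)=0$ then $\br'_+(s)=(1+\jjj(s))\br'_\pm(s)$ and $\br'_-(s)=(1-\jjj(s))\br'_\pm(s)$, so $|1+\jjj(s)|\,\|\br'_\pm(s)\|=1=|1-\jjj(s)|\,\|\br'_\pm(s)\|$, whence $|1+\jjj(s)|=|1-\jjj(s)|$ and hence $\jjj(s)=0$ and $\br'_+(s)=\br'_-(s)$. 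This completes (2) and proves (1) in the case $\jj(s)=0$. If instead $\jj(s)<0$, apply $x^*_+$ to $\br'_+(s)=\jj(s)\br(s)+(1+\jjj(s))\br'_\pm(s)$ to obtain $\jj(s)+(1+\jjj(s))\alpha=0$ with $\alpha:=x^*_+(\br'_\pm(s))\ge 0$. Since $\jj(s)<0$, necessarily $\alpha>0$ and $1+\jjj(s)>0$, i.e., $\jjj(s)>-1$; the symmetric use of $x^*_-$ gives $\jjj(s)<1$. The main subtlety is the subdifferential-duality step producing $x^*_+$; in two dimensions this can alternatively be verified directly by a Hahn--Banach separation argument applied to the tangent cone at $\br(s)$, but otherwise it is the only non-elementary input.
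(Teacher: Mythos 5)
Your proof is correct, but it follows a genuinely different route from the paper. The paper disposes of (1) and the converse of (2) by an orientation argument (the bases $(\br(s),\br'_-(s))$, $(\br(s),\br'_+(s))$, $(\br(s),\br'_\pm(s))$ all have the same orientation, so the $\br'_\pm(s)$-coordinates $1\pm\jjj(s)$ are positive), and it justifies (3) only by appeal to the convexity of $B_X$ together with a picture. You instead run everything through the duality with supporting functionals: the inequality $x^*(\br'_+(s))\le 0\le x^*(\br'_-(s))$ for every norm-one $x^*$ with $x^*(\br(s))=1$, plus the existence of extremal supporting functionals $x^*_\pm$ annihilating the respective one-sided derivatives (via the standard formula expressing the one-sided directional derivative of the norm as the maximum of the subdifferential, together with compactness of the set of supporting functionals). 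Your interpolation argument producing $x^*_0$ with $x^*_0(\br'_\pm(s))=0$ and the resulting identity $2\jj(s)=x^*_0(\br'_+(s))-x^*_0(\br'_-(s))\le 0$ is in effect a rigorous rendering of the paper's picture-proof of (3), and is the more valuable part of your write-up. Your treatment of (1) via $\jj(s)+(1+\jjj(s))\alpha=0$ and its mirror image correctly yields $-1<\jjj(s)<1$ in the case $\jj(s)<0$, and your converse of (2) is actually cleaner than the paper's, since $|1+\jjj(s)|=|1-\jjj(s)|$ already forces $\jjj(s)=0$ without any appeal to orientation. The trade-off is that your argument imports the subdifferential machinery (or a Hahn--Banach separation at the tangent cone), whereas the paper's orientation argument is elementary; on the other hand, yours turns the heuristic step (3) into an actual proof.
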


\begin{proof} 1. It is easy to see that the bases $\big(\br(s),\br'_-(s)\big)$, $\big(\br(s),\br'_+(s)\big)$, $\big(\br(s),\br'_\pm(s)\big)$ have the same orientation, which implies that for the basis $\big(\br(s),\br'_\pm(s)\big)$ the $\br'_\pm(s)$-coordinates $1+\jjj(s)$ and $1-\jjj(s)$ of the vectors $\br'_+(s)$ and $\br'_-(s)$  are positive and hence $|\jjj(s)|<1$.
\smallskip

2. If $\br'_-(s)=\br'_+(s)$, then $0=\frac12(\br'_+(s)-\br'_-(s))=\jj(s)\cdot \br(s)+\jjj(s)\cdot\br'_\pm(s)$ and hence $\jj(s)=0$. If $\jj(s)=0$, then the vectors $\br'_+(s)=(1+\jjj(s))\br'_\pm(s)$ and $\br'_-(s)=(1-\jjj(s))\br'_\pm(s)$ are collinear and hence they are equal since they have the same norm and the bases $(\br(s),\br'_-(s))$ and $(\br(s),\br'_+(s))$ have the same orientation.
\smallskip
\newpage

3. The inequality $\jj(s)\le 0$ follows from the convexity of the ball $B_X$, see the following picture.

\begin{picture}(200,180)(-100,15)
\put(100,100){\circle*{3}}
\put(100,100){\vector(1,0){100}}
\put(200,100){\vector(1,3){20}}
\put(200,100){\vector(-1,1){61}}
\put(200,100){\vector(-1,3){20}}


\put(140,90){$\br(s)$}
\put(128,170){$\br'_+(s)$}
\put(170,170){$\br'_\pm(s)$}
\put(213,170){$\br'_-(s)$}

\thicklines
\qbezier(0,100)(50,250)(200,100)
\qbezier(0,100)(150,-50)(200,100)
\end{picture}
\end{proof}

In the following lemma we use the standard function $\sign:\IR\to\{-1,0,1\}$ defined by the formula
$$
\sign(\e)=\begin{cases}
1&\mbox{if $\e>0$};\\
0&\mbox{if $\e=0$};\\
-1&\mbox{if $\e<0$}.
\end{cases}
$$

\begin{lemma}\label{l:a} Let $a,b,s\in\IR$ be such that $\br$ is differentiable at $b$ and $$0\ne \br(b)-\br(a)=\|\br(b)-\br(a)\|\cdot \br(s).$$ Let $\br'(b)=x\cdot \br(s)+y\cdot \br'_\pm(s)$ for some $x,y\in\IR$ with $y\ne 0$. Then $y>0$ and for any small $\e$ we have the asymptotic formula
$$\|\br(b+\e)-\br(a)\|=\|\br(b)-\br(a)\|+x\cdot\e-\frac{\sign(\e)\cdot \jj(s)\cdot y}{1+\sign(\e)\cdot\jjj(s)}\cdot\e+o(\e).$$
\end{lemma}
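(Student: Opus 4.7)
My plan is to use differentiability of $\br$ at $b$ to reduce the asymptotic to an expansion of $\|\br(s)+u\,\br'_\pm(s)\|$ as $u\to 0$, and to derive $y>0$ by a separate convexity argument.

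Setting $M=\|\br(b)-\br(a)\|>0$, differentiability at $b$ gives
$$\br(b+\e)-\br(a)=(M+\e x)\br(s)+\e y\,\br'_\pm(s)+o(\e),$$
and since taking the norm commutes with an $o(\e)$ vector perturbation (via the reverse triangle inequality), the task reduces to expanding $\|A\,\br(s)+B\,\br'_\pm(s)\|$ with $A=M+\e x>0$ and $B=\e y$. Factoring out $A$ and writing $u=B/A=\e y/M+O(\e^2)$, the computation collapses to the asymptotic of $\|\br(s)+u\,\br'_\pm(s)\|$ as $u\to 0$.

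The key lemma I would establish is
$$\|\br(s)+u\,\br'_\pm(s)\|=1-\frac{\sign(u)\,\jj(s)\,u}{1+\sign(u)\,\jjj(s)}+o(u).$$
For $u>0$, combining the one-sided expansion of $\br$ at $s$ with $\br'_+(s)=\jj(s)\br(s)+(1+\jjj(s))\br'_\pm(s)$ yields
$$\br(s+h)=(1+h\jj(s))\br(s)+h(1+\jjj(s))\br'_\pm(s)+o(h)\quad(h>0).$$
For small $h>0$ the coefficient of $\br(s)$ is positive, so $\br(s+h)=A_h\bigl[\br(s)+u_h\br'_\pm(s)\bigr]$ with $A_h=1+h\jj(s)+o(h)$ and $u_h=h(1+\jjj(s))+o(h)$. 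Since $\|\br(s+h)\|=1$, this gives $\|\br(s)+u_h\br'_\pm(s)\|=1/A_h=1-h\jj(s)+o(h)$; inverting $u_h$ to $h=u/(1+\jjj(s))+o(u)$ yields the lemma for $u>0$. The $u<0$ case runs symmetrically via $\br'_-(s)$ and $\br(s-h)$, producing the $1-\jjj(s)$ denominator which agrees with $1+\sign(u)\jjj(s)$ there. Once $y>0$ is known, $\sign(u)=\sign(\e)$; multiplying by $A=M+\e x$ and collecting terms gives the asserted expansion.

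For the positivity of $y$, I would use convexity of $B_X$. Since $\br$ is differentiable at $b$, the line $\br(b)+\IR\br'(b)$ is a supporting line to $B_X$, and the counterclockwise orientation of the natural parametrization places the interior of $B_X$ on the left of $\br'(b)$. Thus $B_X$ lies in the closed half-plane where, in the positively oriented basis $(\br(s),\br'_\pm(s))$, the determinant of $\br'(b)$ with the displacement from $\br(b)$ is nonnegative. Applied to $\br(a)\in B_X$, for which the displacement is $-M\br(s)$, the determinant equals $x\cdot 0-y\cdot(-M)=My$, so $y\ge 0$; the hypothesis $y\ne 0$ then forces $y>0$.

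I expect the main obstacle to be the book-keeping in the second step: juggling the two one-sided asymptotics, unifying them into a single $\sign(u)$-formula, and ensuring all the remainder terms remain $o(\e)$ under the substitutions $u=B/A$ and multiplication by $A$.
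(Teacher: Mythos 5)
Your proof is correct, and it reorganizes the argument in a way that differs genuinely from the paper's. The paper keeps everything vectorial: it writes $\br(b+\e)-\br(a)=\|\br(b+\e)-\br(a)\|\cdot\br(s+\delta)$ for an implicit small $\delta$, expands both sides in the basis $(\br(s),\br'_\pm(s))$, and solves the resulting two coordinate equations (determining $\sign(\delta)=\sign(\e)$ along the way) to extract the norm. You instead strip off the $o(\e)$ perturbation by the reverse triangle inequality, factor out the radial coefficient, and isolate as a key lemma the one-sided expansion
$$\|\br(s)+u\,\br'_\pm(s)\|=1-\frac{\sign(u)\,\jj(s)\,u}{1+\sign(u)\,\jjj(s)}+o(u),$$
i.e.\ the one-sided derivatives of the norm at the (possibly non-smooth) point $\br(s)$ in the direction $\br'_\pm(s)$, expressed through the jumps. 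Your $h$ plays exactly the role of the paper's $\delta$, so the two computations are the same at bottom, but your packaging has the advantage of separating the ``corner behaviour of the norm at $\br(s)$'' from the ``perturbation at $b$'' and of making transparent why only $x$, $y$, $\jj(s)$, $\jjj(s)$ enter the final formula; the paper's version avoids the inversion $u\mapsto h$ (your intermediate-value step) at the cost of manipulating a $2\times 2$ system of asymptotic equations. Your orientation argument for $y>0$ (the ball lies on the positive-determinant side of the supporting line at $\br(b)$, and $\br(a)-\br(b)=-M\br(s)$ gives determinant $My\ge 0$) is a correct and slightly more explicit rendering of the paper's appeal to the common orientation of the bases $(\br(s),\br'_\pm(s))$ and $(\br(s),\br'(b))$, which the paper justifies only by a picture.
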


\begin{proof} The positivity of $y$ follows from $0\ne\br(b)-\br(a)=\|\br(b)-\br(a)\|\cdot \br(s)$ and the same orientation of the bases $(\br(s),\br'_\pm(s))$ and $(\br(s),\br'(b))$, see the following picture.

\begin{picture}(200,200)(-100,0)
\put(100,100){\circle*{3}}
\put(100,100){\vector(1,0){100}}
\put(200,100){\vector(1,3){20}}
\put(200,100){\vector(-1,1){61}}
\put(200,100){\vector(-1,3){20}}


\put(140,90){$\br(s)$}
\put(128,170){$\br'_+(s)$}
\put(170,170){$\br'_\pm(s)$}
\put(213,170){$\br'_-(s)$}
\put(220,88){$\br'(b)$}
\put(98,53){$\br(b)-\br(a)$}

\put(170,45){\vector(1,1){46}}
\put(63,48){\vector(1,0){110}}

\put(42,40){$\br(a)$}
\put(175,40){$\br(b)$}

\thicklines
\qbezier(0,100)(50,250)(200,100)
\qbezier(0,100)(150,-50)(200,100)
\end{picture}

Since $\br(b)-\br(a)=\|\br(b)-\br(a)\|\cdot\br(s)$, for a small $\e$ there exists a small $\delta$ such that 
$$\br(b+\e)-\br(a)=\|\br(b+\e)-\br(a)\|\cdot \br(s+\delta).$$
By the differentiability of $\br$ at $b$, we obtain
$$
\begin{aligned}
\br(b+\e)-\br(a)&=\br(b)+\br'(b)\e+o(\e)-\br(a)=\|\br(b)-\br(a)\|\cdot\br(s)+(x{\cdot}\br(s)+y{\cdot}\br'_\pm(s)){\cdot}\e+o(\e)=\\
&=\big(\|\br(b)-\br(a)\|+x\e+o(\e))\cdot\br(s)+(y\e+o(\e))\cdot\br'_\pm(s).
\end{aligned}
$$
If $\delta>0$, then 
$$
\br(s+\delta)=\br(s)+\br'_+(s)\delta+o(\delta)=(1+(\jj(s)+o(1))\delta){\cdot}\br(s)+(1+\jjj(s)+o(1))\delta{\cdot}\br'_\pm(s).
$$
If $\delta<0$, then 
$$
\br(s+\delta)=\br(s)+\br'_-(s)\delta+o(\delta)=(1-(\jj(s)+o(1))\delta){\cdot}\br(s)+(1-\jjj(s)+o(1))\delta{\cdot}\br'_\pm(s).
$$

In both cases we obtain
$$
\br(s+\delta)=(1+(\sign(\delta){\cdot}\jj(s)+o(1))\delta){\cdot}\br(s)+(1+\sign(\delta){\cdot}\jjj(s)+o(1))\delta{\cdot}\br'_\pm(s).
$$
It follows that
$$
\begin{aligned}
&(\|\br(b)-\br(a)\|+x\e+o(\e))\br(s)+(y\e+o(\e))\br'_\pm(s)=\br(b+\e)-\br(a)=\\
&=\|\br(b+\e)-\br(a)\|\cdot\br(s+\delta)=\\
&=\|\br(b+\e)-\br(a)\|\cdot \big(1+(\sign(\delta){\cdot}\jj(s)+o(1))\delta){\cdot}\br(s)+(1+\sign(\delta){\cdot}\jjj(s)+o(1))\delta{\cdot}\br'_\pm(s)\big).
\end{aligned}
$$
Writing this equation in coordinates, we obtain two equations:
\begin{equation}\label{eq:one}
\|\br(b+\e)-\br(a)\|\cdot \big(1+(\sign(\delta){\cdot}\jj(s)+o(1))\delta\big)=\|\br(b)-\br(a)\|+x{\cdot}\e+o(\e)
\end{equation} 
and
\begin{equation}\label{eq:two}
\|\br(b+\e)-\br(a)\|\cdot (1+\sign(\delta){\cdot}\jjj(s)+o(1))\cdot \delta=y\cdot\e+o(\e).
\end{equation} 
The equation (\ref{eq:two}) implies $$\delta=\frac{(y+o(1))\e}{\|\br(b+\e)-\br(a)\|(1+\sign(\delta){\cdot}\jjj(s))}=\frac{(y+o(1))\e}{\|\br(b)-\br(a)\|\cdot (1+\sign(\delta){\cdot}\jjj(s))}.$$Since $y>0$ and $|\jjj(s)|<1$, this implies 
$$\sign(\delta)=\sign(\e)+o(1).$$
After substitution of $\delta$ into the equation (\ref{eq:one}), we obtain
$$
\begin{aligned}
&\|\br(b+\e)-\br(a)\|=\frac{\|\br(b)-\br(a)\|+x{\cdot}\e+o(\e)}{1+(\sign(\delta){\cdot}\jj(s)+o(1))\delta}=\\
&=\big(\|\br(b)-\br(a)\|+x{\cdot}\e+o(\e)\big)\cdot\big(1-(\sign(\delta){\cdot}\jj(s)+o(1))\delta\big)=\\
&=\big(\|\br(b)-\br(a)\|+x{\cdot}\e+o(\e)\big)\cdot\Big(1-\frac{(\sign(\e){\cdot}\jj(s)+o(1))(y+o(1))\cdot \e}{\|\br(b)-\br(a)\|(1+\sign(\e){\cdot}\jjj(s))}\Big)=\\
&=\|\br(b)-\br(a)\|+x\cdot\e-\frac{\sign(\e){\cdot}\jj(s){\cdot}y}{1+\sign(\e){\cdot}\jjj(s)}\cdot\e+o(\e).
\end{aligned}
$$

\end{proof}

Lemmas~\ref{l:a} and \ref{l:j} imply the following corollary.

\begin{corollary}\label{l:dif} Let $a,b,s\in\IR$ be real numbers such that the map $\br$ is differentiable at $b$ and $$0\ne \br(b)-\br(a)=\|\br(b)-\br(a)\|\cdot \br(s).$$ The map $\br$ is differentiable at the point $s$ if and only if the map 
$$\nu:\IR\to \IR,\quad\nu:\e\mapsto \|\br(b+\e)-\br(a)\|,$$
is differentiable at zero.
\end{corollary}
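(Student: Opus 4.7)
The plan is to apply Lemma~\ref{l:a} to read off the one-sided derivatives of $\nu$ at~$0$, and to verify that their equality is equivalent to $\jj(s)=0$, which by Lemma~\ref{l:j}(2) is precisely the differentiability of $\br$ at~$s$.

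First, since $\br$ is differentiable at~$b$ and $\br(s),\br'_\pm(s)$ form a basis of $X$ (as noted just before Lemma~\ref{l:j}), I would expand $\br'(b)=x\cdot\br(s)+y\cdot\br'_\pm(s)$ for some $x,y\in\IR$. In the main case $y\ne 0$, Lemma~\ref{l:a} yields
$$\nu(\e)-\nu(0)=x\,\e-\frac{\sign(\e)\cdot\jj(s)\cdot y}{1+\sign(\e)\cdot\jjj(s)}\,\e+o(\e),$$
so dividing by $\e$ and letting $\e\to 0^{\pm}$ gives
$$\nu'_+(0)=x-\frac{\jj(s)\cdot y}{1+\jjj(s)},\qquad \nu'_-(0)=x+\frac{\jj(s)\cdot y}{1-\jjj(s)}.$$
The map $\nu$ is differentiable at $0$ iff these two values coincide; setting them equal and placing the fractions over the common denominator $1-\jjj(s)^2$ collapses the condition to $-\frac{2\,\jj(s)\,y}{1-\jjj(s)^2}=0$. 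Since $y>0$ by Lemma~\ref{l:a} and $|\jjj(s)|<1$ by Lemma~\ref{l:j}(1), this is equivalent to $\jj(s)=0$, which by Lemma~\ref{l:j}(2) is equivalent to $\br'_-(s)=\br'_+(s)$, i.e., to the differentiability of $\br$ at~$s$.

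The only subtlety is the degenerate case $y=0$, where $\br'(b)$ is parallel to the chord direction $\br(s)$; geometrically this forces $\br(a)$ and $\br(b)$ to lie on a common flat segment of the unit sphere of $X$, and a short direct local computation on that segment shows that $\nu$ is differentiable at $0$ and $\br$ is differentiable at~$s$ simultaneously, so the equivalence still holds. Apart from this edge case the argument is a mechanical application of the asymptotic expansion from Lemma~\ref{l:a}; the only point of care is ensuring that the bounds $|\jjj(s)|<1$ and $y>0$ are indeed available to pass from the vanishing of the combined fraction to the desired conclusion $\jj(s)=0$.
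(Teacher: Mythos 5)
Your treatment of the main case $y\ne 0$ is correct and is exactly the derivation the paper intends: the paper gives no separate proof, merely observing that the corollary follows from Lemmas~\ref{l:a} and \ref{l:j}, and your extraction of the one-sided derivatives $\nu'_{+}(0)=x-\frac{\jj(s)y}{1+\jjj(s)}$ and $\nu'_{-}(0)=x+\frac{\jj(s)y}{1-\jjj(s)}$ from the asymptotic formula, followed by the reduction of their equality to $\jj(s)=0$ using $y>0$ and $|\jjj(s)|<1$, is that argument carried out in full.

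However, your paragraph on the degenerate case $y=0$ asserts something false. You correctly note that $y=0$ forces $\br(a)$ and $\br(b)$ to lie on a common flat segment of $S_X$ whose direction is $\br(s)$; but the ``short direct local computation'' on that segment shows only that $\nu$ is affine near $0$ (hence differentiable there) --- it gives no information about differentiability of $\br$ at $s$, since $\br(s)$ is the point of the sphere lying in the direction of the segment and may well be non-smooth. Concretely, let $S_X$ be the hexagon with vertices $(\pm1,\pm1)$ and $(0,\pm2)$, and take $\br(a)=(1,-1)$, $\br(b)=(1,0)$. Then $\br$ is differentiable at $b$ (an interior point of the vertical edge), and $\br(b)-\br(a)=(0,1)=\tfrac12\cdot(0,2)=\|\br(b)-\br(a)\|\cdot\br(s)$ with $\br(s)=(0,2)$ a vertex, so $\br$ is \emph{not} differentiable at $s$; yet for small $\e$ the point $\br(b+\e)$ stays on the vertical edge, so $\br(b+\e)-\br(a)$ is a positive multiple of $(0,1)$ depending affinely on $\e$ and $\nu$ \emph{is} differentiable at $0$. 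Thus the equivalence genuinely fails when $y=0$ and your claim that ``the equivalence still holds'' cannot be repaired. The corollary must be read with the hypothesis $y\ne0$ inherited from Lemma~\ref{l:a} (automatic when $X$ is strictly convex, which is the setting where the paper uses these ideas); under that reading your main argument is complete and the degenerate-case paragraph should simply be deleted.
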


\section{Recognizing smooth points on the unit sphere}

A point $x$ of the unit sphere of a Banach space $X$ is defined to be {\em smooth} if there exists a unique linear continuous functional $x^*:X\to\IR$ such that $x^*(x)=1=\|x^*\|$. 

The following lemma proved by Cabello S\'anchez \cite{CS} shows that the smooth points can be recognized by measurements of distances on the sphere.

\begin{lemma}\label{l:ns} A point $p\in S_X$ is non-smooth if and only if there exists positive real numbers $\delta$ and $\e_0$ such that for every positive $\e<\e_0$ there are points $x,y\in S_X$ such that $$\max\{\|x-p\|,\|y+p\|\}<\e\quad\mbox{and}\quad\|u-v\|<(2-\delta)\e.$$
\end{lemma}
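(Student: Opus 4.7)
My plan is to prove both implications by a first-order analysis of the norm near $p$ and $-p$, using that non-smoothness of $p$ is equivalent to the existence of two distinct supporting functionals at $p$.

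For the easy implication I argue by contrapositive. If $p$ is smooth, let $\varphi\in X^*$ be the unique supporting functional at $p$; then $\varphi$ is the Gateaux derivative of $\|\cdot\|$ at $p$, and a short check shows $-\varphi$ is the unique supporting functional at $-p$. For any $x,y\in S_X$ with $\|x-p\|,\|y+p\|<\e$, write $x=p+h$ and $y=-p+k$. The Gateaux expansions $\|p+h\|=1+\varphi(h)+o(\|h\|)$ and $\|-p+k\|=1-\varphi(k)+o(\|k\|)$, combined with $\|x\|=\|y\|=1$, force $\varphi(h)=o(\e)$ and $\varphi(k)=o(\e)$ uniformly over such $x,y$. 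Applying the analogous expansion at $2p$ (whose derivative is again $\varphi$ by positive homogeneity) yields
$$\|x-y\|=\|2p+(h-k)\|=2+\varphi(h-k)+o(\|h-k\|)=2+o(\e),$$
so $\|x-y\|>2-\delta\e$ for every fixed $\delta>0$ and sufficiently small $\e$, contradicting the metric condition of the lemma.

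For the forward implication I use two distinct supporting functionals $\varphi_1\ne\varphi_2$ at $p$. Picking $w\in X$ with $\varphi_1(w)\ne\varphi_2(w)$ and setting $v=w-\tfrac12\bigl(\varphi_1(w)+\varphi_2(w)\bigr)p$ gives $\varphi_1(v)=a=-\varphi_2(v)$ with some $a>0$. Since $\|\varphi_i\|=1$, for every $t>0$ we have $\|p+tv\|\ge\varphi_1(p+tv)=1+at$ and $\|p-tv\|\ge\varphi_2(p-tv)=1+at$, while the Lipschitz bound $\|p\pm tv\|\le 1+\|v\|t$ ensures that $R(t):=\|p+tv\|-1$ and $S(t):=\|p-tv\|-1$ both lie in $[at,\|v\|t]$. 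Setting $x_t=(p+tv)/\|p+tv\|$ and $y_t=(-p+tv)/\|-p+tv\|$, a direct expansion gives
$$x_t-y_t=\bigl(2-R(t)-S(t)\bigr)p+O(t^2),\qquad\|x_t-y_t\|\le 2-2at+O(t^2),$$
while $\max\{\|x_t-p\|,\|y_t+p\|\}=O(t)$. Taking $t$ proportional to $\e$, small enough that both distance bounds lie strictly below $\e$, produces the required pair $x_t,y_t$ for any $\delta$ slightly less than $a/\|v\|$. The main technical obstacle is the absence of a two-sided Gateaux derivative of $\|\cdot\|$ at a non-smooth $p$, which blocks any clean linear expansion of $\|p+tv\|$; I sidestep this by working with the two independent Hahn--Banach lower bounds supplied by $\varphi_1$ and $\varphi_2$, whose distinctness is precisely what forces $R(t)+S(t)\ge 2at$ and, via the calculation above, drives $\|x_t-y_t\|$ strictly below $2$ at first order.
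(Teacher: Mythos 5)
Your argument is essentially correct, and it supplies something the paper does not: the paper only cites this lemma to [CS] and gives no proof, so there is nothing internal to compare against. Two remarks. First, you have (rightly) repaired the statement as printed, which is garbled: the points called $u,v$ in the displayed inequality must be the $x,y$ just introduced, and the bound must read $\|x-y\|<2-\delta\e$ rather than $\|x-y\|<(2-\delta)\e$ (with $x$ near $p$ and $y$ near $-p$ the quantity $\|x-y\|$ is close to $2$, so the literal inequality is unsatisfiable); your proof establishes exactly the corrected statement, which is the one the rest of the paper actually uses. It would be worth saying explicitly that this is the version being proved. Second, in the smooth direction you pass from the Gateaux expansion $\|p+h\|=1+\varphi(h)+o(\|h\|)$ to an estimate that is \emph{uniform} over all admissible $h$; that uniformity is Fr\'echet differentiability, not Gateaux. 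This is harmless here because a convex function on a finite-dimensional space that is Gateaux differentiable at a point is automatically Fr\'echet differentiable there, and the lemma is only ever applied to $2$-dimensional spaces ($Y$ is forced to be $2$-dimensional since $S_Y$ is homeomorphic to $S_X$), but since the lemma is stated for an arbitrary Banach space you should either invoke this fact or restrict the statement. The non-smooth direction is clean: the normalization $v=w-\tfrac12(\varphi_1(w)+\varphi_2(w))p$ producing $\varphi_1(v)=a=-\varphi_2(v)>0$, the two Hahn--Banach lower bounds $R(t),S(t)\ge at$, and the expansion $x_t-y_t=(2-R(t)-S(t))p+O(t^2)$ all check out, and the final choice of $t$ proportional to $\e$ with $\delta$ just below $a/\|v\|$ closes the argument.
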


Another smoothness criterion is given by the following lemma.

\begin{lemma}\label{l:sd} Let $X$ be a strictly convex $2$-dimensional Banach space and $a,b,c\in S_X$ be points such that $\br$ is differentiable at $b$ and $0\ne b-a=\|b-a\|\cdot c$. The point $c$ is not smooth if and only if there exist positive real numbers $\delta$ and $\e_0$ such that for every positive $\e<\e_0$ there are two distinct points $x,y\in S_X$ such that $\|x-b\|=\e=\|y-b\|$ and $\|x-a\|+\|y-a\|>2{\cdot}\|b-a\|+\delta\cdot \e$.
\end{lemma}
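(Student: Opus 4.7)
My plan is to reduce both directions of the equivalence to an asymptotic analysis in the natural parameterization, using Lemma~\ref{l:a}, and then to invoke strict convexity to control the configuration of the two candidate points near $b$. Pick parameters $\tilde a,\tilde b,\tilde c\in\IR$ with $\br(\tilde a)=a$, $\br(\tilde b)=b$, $\br(\tilde c)=c$; the chord hypothesis then matches the standing assumption of Lemma~\ref{l:a}. Writing $\br'(\tilde b)=\xi\,\br(\tilde c)+\eta\,\br'_\pm(\tilde c)$ with $\eta>0$ (Lemma~\ref{l:a}), the function $\nu(\varepsilon):=\|\br(\tilde b+\varepsilon)-a\|$ admits the one-sided expansion
$$\nu(\varepsilon)=\|b-a\|+\xi\varepsilon-\frac{\sign(\varepsilon)\,\jj(\tilde c)\,\eta}{1+\sign(\varepsilon)\,\jjj(\tilde c)}\,\varepsilon+o(\varepsilon),$$
whose jump at $0$ equals
$$\Delta:=\nu'_+(0)-\nu'_-(0)=-\frac{2\,\jj(\tilde c)\,\eta}{1-\jjj(\tilde c)^2}.$$
By Lemma~\ref{l:j}(1),(3) we have $\Delta\ge 0$, and by Lemma~\ref{l:j}(2), $\Delta>0$ if and only if $c$ is non-smooth.

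For the ``if'' direction, assume $c$ non-smooth, set $\delta:=\Delta/2>0$, and for each small $\varepsilon>0$ use continuity together with Lemma~\ref{l:d} to produce $\alpha_+,\alpha_->0$ with $\|\br(\tilde b+\alpha_+)-b\|=\|\br(\tilde b-\alpha_-)-b\|=\varepsilon$ and $\alpha_\pm=\varepsilon(1+o(1))$. Taking $x:=\br(\tilde b+\alpha_+)$, $y:=\br(\tilde b-\alpha_-)$, the expansion gives
$$\|x-a\|+\|y-a\|=\nu(\alpha_+)+\nu(-\alpha_-)=2\|b-a\|+\nu'_+(0)\alpha_+-\nu'_-(0)\alpha_-+o(\varepsilon)=2\|b-a\|+\Delta\,\varepsilon+o(\varepsilon),$$
which exceeds $2\|b-a\|+\delta\varepsilon$ for all sufficiently small $\varepsilon$. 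For the ``only if'' direction, given the postulated pairs $x_\varepsilon\ne y_\varepsilon$, write $x_\varepsilon=\br(\tilde b+\alpha_x)$, $y_\varepsilon=\br(\tilde b+\alpha_y)$ with $|\alpha_x|,|\alpha_y|=\varepsilon(1+o(1))$ by Lemma~\ref{l:d}. Strict convexity of $X$ forces $\alpha_x$ and $\alpha_y$ to have opposite signs for small $\varepsilon$ (see the next paragraph); substituting this into the expansion yields $\|x_\varepsilon-a\|+\|y_\varepsilon-a\|=2\|b-a\|+\Delta\varepsilon+o(\varepsilon)$, and the assumed inequality $>2\|b-a\|+\delta\varepsilon$ then forces $\Delta\ge\delta>0$, so $c$ is non-smooth.

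The hard part is the opposite-signs claim: in a strictly convex $2$-dimensional Banach space with $\br$ differentiable at $\tilde b$, for small $\varepsilon>0$ the equation $\|\br(\tilde b+t)-b\|=\varepsilon$ has at most one positive and at most one negative solution, equivalently, $b+\varepsilon S_X$ meets $S_X$ in exactly two points, one on each side of $b$ in the natural parameterization. I would verify this by combining two consequences of strict convexity: first, the tangent line $T_b=b+\IR\br'(\tilde b)$ meets $B_X$ only at $b$, so locally near $b$ the sphere $S_X$ is the graph $\xi=F(\eta)$ of a strictly concave function in the basis $(b,\br'(\tilde b))$ with $F(0)=1$, $F'(0)=0$; second, the function $p\mapsto\|p-b\|$ on $S_X$ has unique minimum at $b$ and unique maximum at $-b$ by the triangle-equality characterization of strict convexity. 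Since $\br$ parametrizes $S_X$ at unit speed (Lemma~\ref{l:r}(4)) and $S_X$ is a strictly convex planar curve, the distance $t\mapsto\|\br(\tilde b+t)-b\|$ is unimodal on $[\tilde b,\tilde b+L]$, hence strictly monotone on each one-sided neighborhood of $\tilde b$, ruling out two solutions on the same side.
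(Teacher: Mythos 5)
Your proposal is correct and follows essentially the same route as the paper: both pass to the natural parameterization based at $c$, apply Lemma~\ref{l:a} to expand $\e\mapsto\|\br(\tilde b+\e)-a\|$ on each side of $0$, identify the derivative jump $-2\jj\,\eta/(1-\jjj^2)$ as positive exactly when $c$ is non-smooth, and for the converse use that for small $\e$ the set $\{x\in S_X:\|x-b\|=\e\}$ is a doubleton with one point on each side of $b$. The only difference is cosmetic: you justify the doubleton/opposite-signs claim in more detail (the paper simply asserts it for small $\e_0$), and you phrase the converse directly via $\Delta\ge\delta>0$ rather than by contradiction.
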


\begin{proof} Assume that the point $b$ is smooth and fix any basis $\mathbf e_1,\mathbf e_2$ for the space $X$ such that $\mathbf e_1=c$. Let $\br:\IR\to S_X$ be the natural parameterization of the 2-based Banach space $(X,\mathbf e_1,\mathbf e_2)$. Then $\br(0)=\mathbf e_1=c$. Find real numbers $\alpha,\beta$ such that $a=\br(\alpha)$ and $b=\br(\beta)$. Since the point $b$ is smooth, the function $\br$ is differentiable at $\beta$. Write the derivative $\br'(\beta)$ as $x\cdot \br(0)+y\cdot\br'_\pm(0)$ for some real numbers $x,y$. Since $X$ is strictly convex and $0\ne b-a=\|b-a\|\cdot c$, the vector $\br'(\beta)$ is not parallel to the vector $c$ and hence $y\ne 0$. By Lemma~\ref{l:a}, $y>0$.

For every $\e\in[0,2]$ let 
$$\e_+=\min\{s\in [0,+\infty):\|\br(\beta+s)-\br(\beta)\|=\e\}$$and
$$\e_-=\max\{s\in (-\infty,0]:\|\br(\beta+s)-\br(\beta)\|=\e\}.$$
By Lemma~\ref{l:d}, $$\e_+=\e+o(\e)=\e+o(\e_+)\quad\mbox{and}\quad\e_-=-\e+o(\e)=-\e+o(\e_-).$$

If the point $c$ is not smooth, then $\br'_-(0)\ne \br'_+(0)$ and $\jj(0)<0$ by Lemma~\ref{l:j}. Then the number 
$$\delta=-\frac{\jj(s)\cdot y}{(1-\jjj(s)^2)}$$ is positive.

By Lemma~\ref{l:a}, for a small $\e>0$ we have
$$
\begin{aligned}
&\|\br(\beta+\e_+)-\br(\alpha)\|+\|\br(\beta+\e_-)-\br(\alpha)\|=\\
&=\|b-a\|+x\cdot\e_+-\frac{\jj(s)\cdot y\cdot|\e_+|}{1+\jjj(s)}+o(\e)+\|b-a\|+x\cdot\e_--\frac{\jj(s)\cdot y\cdot|\e_-|}{1-\jjj(s)}+o(\e)=\\
&=2\cdot\|b-a\|+x(\e_++\e_-)-\frac{\jj(s){\cdot} y{\cdot}|\e|(1+o(1))}{1+\jjj(s)}-\frac{\jj(s){\cdot} y{\cdot}|\e|(1+o(1))}{1-\jjj(s)}+o(\e)=\\
&=2\cdot\|b-a\|+o(\e)+2\delta|\e|+o(\e)=2\cdot\|b-a\|+(2\delta+o(1))|\e|.
\end{aligned}
$$and hence there exists $\e_0>0$ such that for any positive $\e<\e_0$ and the points $x=\br(\beta+\e_+)$ and $y=\br(\beta+\e_-)$ we have
$$\|x-a\|+\|y-a\|=\|\br(\beta+\e_+)-\br(\alpha)\|+\|\br(\beta+\e_-)-\br(\alpha)\|=2\cdot \|b-a\|+(2\delta+o(1))|\e|>2\cdot\|b-a\|+\delta|\e|.$$
The choice of $\e_+$ and $\e_-$ guarantees that $\|x-a\|=\e=\|y-a\|$.
This completes the proof of the ``only if'' part of the lemma.

\smallskip
To prove the ``only if'' part, assume that there exist positive $\delta$ and $\e_0$ such that for any positive $\e<\e_0$ there exist distinct points $x,y\in S_X$ such that $\|x-b\|=\e=\|y-b\|$ and $$\|x-a\|+\|y-a\|>2\|b-a\|+\delta\cdot\e.$$ We can assume that $\e_0$ is so small that for any positive $\e<\e_0$ the set $\{x\in S_X:\|x-b\|=\e\}$ coincides with the doubleton $\{\br(\beta+\e_+),\br(\beta+\e_-)\}$. Assuming that the point $c$ is smooth, we conclude that $\br'_-(0)=\br'_+(0)$ and hence $\jj(0)=0=\jjj(0)$. By Lemma~\ref{l:a}, 
$$\|\br(\beta+\e_+)-a\|+\|\br(\beta+\e_-)-a\|=2\cdot\|b-a\|+x\cdot(\e_++\e_-)+o(\e)=2\cdot\|b-a\|+o(\e).$$Replacing $\e_0$ by a smaller positive number, we can assume that 
$$\|\br(\beta+\e_+)-a\|+\|\br(\beta+\e_-)-a\|=2\cdot\|b-a\|+o(\e)<2\cdot\|b-a\|+\delta\cdot|\e|$$for all positive $\e<\e_0$. But this contradicts our assumption. This contradiction shows that the point $c$ is not smooth.
\end{proof}

\section{Special directions on the unit sphere}\label{s:special}

\begin{definition} Let $X$ be a Banach space. A point $x\in S_X$ is called {\em special} if for any bijective isometry $f:S_X\to S_Y$ to the unit sphere of a Banach space $Y$ and any points $y,z\in S_X$ with $y-z=\|y-z\|\cdot x$ we have 
$$f(y)-f(z)=\|f(y)-f(z)\|\cdot f(x)=\|y-z\|\cdot f(x).$$
\end{definition} 

\begin{proposition}\label{p1} Every non-smooth point of the unit sphere of a strictly convex $2$-dimensional Banach space is special.
\end{proposition}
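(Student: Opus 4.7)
The plan is to reduce to the case $Y$ strictly convex, build a continuous ``chord direction'' map $\phi$ on the connected set of admissible pairs, use Lemma~\ref{l:sd} to force $\phi$ to land in the at-most-countable set of non-smooth points of $S_Y$, and conclude $\phi\equiv f(x)$ by a connectedness argument.

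If $Y$ is not strictly convex, Theorem~\ref{t:CS} applied to $f^{-1}:S_Y\to S_X$ shows that $Y$ has the Mazur--Ulam property, so $f$ extends to a linear isometry and the conclusion is immediate. I therefore assume $Y$ strictly convex; then $\dim Y=2$ because $S_Y$ is isometric to $S_X$ (a topological circle), and by Tingley's classical observation $f$ is odd. Let
\[
A=\{(y,z)\in S_X\times S_X: y-z=t\cdot x\text{ for some }t>0\}.
\]
Strict convexity of $X$ makes $A$ homeomorphic to an open arc of $S_X$ via the projection $(y,z)\mapsto z$, hence a connected $1$-dimensional set; it contains the diameter pair $(x,-x)$ at $t=2$. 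Define the continuous map $\phi:A\to S_Y$ by $\phi(y,z)=(f(y)-f(z))/\|f(y)-f(z)\|$; the oddness of $f$ gives $\phi(x,-x)=(f(x)-(-f(x)))/\|2f(x)\|=f(x)$.

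The key step is that $\phi$ takes values in the non-smooth points of $S_Y$ on the cocountable subset $U=\{(y,z)\in A: y\text{ is a smooth point of }S_X\}$ (here $A\setminus U$ is at most countable by Lemma~\ref{l:r}(3)). For $(y,z)\in U$, Lemma~\ref{l:sd} in $X$ with $a=z$, $b=y$, $c=x$ supplies $\delta,\e_0>0$ such that for every $\e\in(0,\e_0)$ there are distinct $p_\e,q_\e\in S_X$ at distance $\e$ from $y$ with $\|p_\e-z\|+\|q_\e-z\|>2\|y-z\|+\delta\e$. Since $f$ is an isometry, the analogous inequality holds in $S_Y$ for $(f(p_\e),f(q_\e))$ relative to $(f(y),f(z))$. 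Lemma~\ref{l:ns} together with the oddness of $f$ ensures that $f$ maps non-smooth points of $S_X$ to non-smooth points of $S_Y$, so $f(y)$ is smooth in $S_Y$ and the natural parameterization of $S_Y$ is differentiable at $f(y)$; Lemma~\ref{l:sd} applied now in $Y$ with $a'=f(z)$, $b'=f(y)$, $c'=\phi(y,z)$ then concludes that $\phi(y,z)$ is non-smooth in $S_Y$.

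Finally, the set of non-smooth points of $S_Y$ is at most countable (Lemma~\ref{l:r}(3) in $Y$), so $\phi(U)$ is countable, and $\phi(A\setminus U)$ is also countable because $A\setminus U$ is. Hence $\phi(A)$ is at most countable. On the other hand $\phi(A)$ is connected as the continuous image of the connected set $A$ in the metric space $S_Y$, and a countable connected subset of a metric space is a singleton. Therefore $\phi(A)=\{f(x)\}$, which gives $f(y)-f(z)=\|y-z\|\cdot\phi(y,z)=\|y-z\|\cdot f(x)$ for every $(y,z)\in A$. The hardest part of the argument will be this last identification step --- passing from ``the chord direction is some non-smooth point'' (all that Lemma~\ref{l:sd} by itself yields) to ``the chord direction is exactly $f(x)$'' --- which is resolved by the countability/connectedness argument anchored by the explicit value $\phi(x,-x)=f(x)$ arising from the oddness of $f$.
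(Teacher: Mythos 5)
Your proof is correct and follows the same overall strategy as the paper: show that the normalized chord map $\phi$ (the paper's $g$) sends every admissible pair with smooth endpoint to a non-smooth point of $S_Y$, by transporting the two-sided metric criterion of Lemma~\ref{l:sd} through the isometry, and then use connectedness to force $\phi$ to be constant. But the route through the topology is genuinely different and worth recording. Where the paper invokes Sierpi\'nski's theorem on decompositions of $[0,1]$ into countably many disjoint closed proper subsets --- merely to produce one point $b$ with both $b$ and $g(b)$ smooth, from which a contradiction is derived --- you prove the stronger statement that $\phi(U)$ lies in the countable set of non-smooth points of $S_Y$, so that $\phi(A)$ is a countable connected subset of a metric space and hence a singleton; this is more elementary and avoids Sierpi\'nski entirely. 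You also make explicit two steps the paper passes over silently: the reduction to the case where $Y$ is strictly convex via Theorem~\ref{t:CS} applied to $f^{-1}$ (this is actually needed, since Lemma~\ref{l:sd} is stated only for strictly convex spaces and its ``if'' direction must be applied inside $Y$), and the identification of the constant value of $\phi$ as $f(x)$ by evaluating at the antipodal pair $(x,-x)$ and using oddness (Lemma~\ref{l:Tingley}). One small wording point: to conclude ``$y$ smooth $\Rightarrow f(y)$ smooth'' you should appeal to the fact that the criterion of Lemma~\ref{l:ns} is an equivalence preserved in both directions by the odd bijective isometry $f$, rather than only to the statement that non-smooth points map to non-smooth points.
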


\begin{proof} Let $c\in S_X$ be a non-smooth point of the sphere $S_X$. 
Since the Banach space $X$ is strictly convex, the set $c^\perp=\{x\in S_X:\{x\}=S_X\cap (x+\IR c)\}$ contains exactly two points. The complement $S_X\setminus c^\perp$ has exactly two connected components. Let  $A$ be the connected component of $S_X\setminus\ c^\perp$ containing the point $c$. It follows that $S_X=(-A)\cup p^\perp\cup A$. 

Let $\theta:A\to -A$ be the function assigning to each point $x\in A$ the unique point $y\in A$ such that $x-y=\|x-y\|\cdot c$. It is clear that the function $\theta$ is injective. 

Now take any bijective isometry $f:S_X\to S_Y$ of $S_X$ onto the unit sphere of a Banach space $Y$ and consider the continuous function 
$$g:A\to S_Y,\;g:x\mapsto \frac{f(x)-f(\theta(x))}{\|f(x)-f(\theta(x))\|}=\frac{f(x)-f(\theta(x))}{\|x-\theta(x)\|}.$$

To prove that the point $c$ is special, it suffices to check that the function $g$ is constant. To derive a contradiction, assume that the function $g$ is not constant. Then for some compact connected subset $K\subseteq A$ the image $g(K)$ is not a singleton. It is clear that $K$ is homeomorphic to the closed interval $[0,1]$. By a classical result of Sierpi\'nski \cite{S}, the unit interval $[0,1]$ cannot be covered by countably many pairwise disjoint closed proper subsets.

 Let $\Lambda_X,\Lambda_Y$ be the sets of non-smooth points on the spheres $S_X$ and $S_Y$, respectively. By Lemma~\ref{l:r}, the sets $\Lambda_X,\Lambda_Y$ are almost countable. For every $y\in \Lambda_Y$ consider the closed set $K_y=K\cap g^{-1}(y)$, which is not equal to $K$ as $g(K)\ne\{y\}$. Then the countable family $\mathcal K=\{K_y:y\in \Lambda_Y\}\cup\{\{x\}:x\in \Lambda_X\setminus\bigcup_{y\in\Lambda_Y}K_y\}$ consists of pairwise disjoint closed subsets of $K$. By the mentioned result of Sierpi\'nski, $K\ne\bigcup\mathcal K$. Consequently, there exists a point $b\in K\setminus\bigcup\K$. It follows that the image $g(b)$ is a smooth point of $S_Y$ and $b\notin \Lambda_X$, which means that the point $b$ is smooth. Let $a=\theta(b)\in -A$. The definition of the map $\theta$ ensures that $b-a=\|b-a\|\cdot c$.
 
Since the point $c$ is not smooth, we can apply Lemma~\ref{l:ns} and find positive $\delta$ and $\e_0$ such that for every positive $\e<\e_0$ there exist two distinct points $x,y\in S_X$ such that $\|x-b\|=\e=\|y-b\|$ and $\|x-a\|+\|y-a\|>2\|b-a\|+\delta\e$.

Since $f:S_X\to S_Y$ is an isometry and $b$ is a smooth point of the sphere $S_X$, its image $f(b)$ is a smooth point of the sphere $S_Y$, according to Lemma~\ref{l:ns}. Observe that $$0\ne f(b)-f(a)=f(b)-f(\theta(b))=\|b-\theta(b)\|\cdot g(b)=\|b-a\|\cdot g(a)=\|f(b)-f(a)\|\cdot g(a).$$
Since the point $g(a)\in S_X$ is smooth we can apply Lemma~\ref{l:ns} and find a positive $\e>0$ such that for any distinct points $u,v\in S_Y$ with $\|u-f(b)\|=\e=\|v-f(b)\|$ we have $\|u-f(a)\|+\|v-f(a)\|\not>2\cdot\|f(b)-f(a)\|+\delta\e$. 

By the choise of $\delta$ and $\e_0$, there exist  points $x,y\in S_X$ such that $\|x-b\|=\e=\|y-b\|$ and $\|x-b\|+\|y-b\|>2\cdot\|b-a\|+\delta\e$. Since $f:S_X\to S_Y$ is an isometry, for the points $u=f(x)$ and $v=f(y)$, we obtain
$$\|f(x)-f(b)\|=\|x-b\|=\e=\|y-b\|=\|f(y)-f(b)\|$$and
$$\|f(x)-f(a)\|+\|f(y)-f(a)\|=\|x-a\|+\|y-a\|>2\cdot\|b-a\|+\delta\e=2\cdot\|f(b)-f(a)\|+\delta\e,$$which contradicts the choice of $\e$. 
\end{proof}

\begin{definition} Let $x,y\in S_X$ be two points on the unit sphere of a 2-dimensional Banach space $X$. The pair $(x,y)$ is called
\begin{itemize}
\item {\em singular} if $\{x,-x\}\cap\{y,-y\}\ne\emptyset$ or there exists a point $z\in S_X$ such that\newline $S_X\cap (z+\IR x)=\{z\}=S_X\cap(z+\IR y)$;
\item {\em regular} if it is not singular.
\end{itemize}
\end{definition} 

\begin{lemma}\label{l:t} Let $u,v\in S_X$ be two points on the unit sphere of a strictly convex $2$-dimensional Banach space $X$. If the pair $(u,v)$ is regular, then for any point $w\in S_X$ there exist points $x,y,z\in S_X$ such that $$x\in z+\IR u\quad y\in z+\IR v\quad\mbox{and}\quad x-y\in \IR w.$$
\end{lemma}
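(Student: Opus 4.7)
The plan is to reduce the existence of the triangle $(x,y,z)$ to the surjectivity of a continuous odd self-map $\Phi$ of $S_X$. For $u\in S_X$, strict convexity of $X$ guarantees that for each $z\in S_X$ the line $z+\IR u$ meets $S_X$ either only at $z$ (when $z$ is $u$-extreme, meaning $S_X\cap(z+\IR u)=\{z\}$) or at exactly two points. I would define $\pi_u\colon S_X\to S_X$ by $\pi_u(z)=z$ in the first case and $\pi_u(z)$ equal to the second intersection point in the second case; $\pi_v$ is defined analogously.

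The two essential properties of $\pi_u$ are continuity and oddness $\pi_u(-z)=-\pi_u(z)$. Oddness is immediate from $-S_X=S_X$ and the identity $-(-z+\IR u)=z+\IR u$, which sends the intersection pair of one line to the intersection pair of the other. For continuity, I would work with the convex function $t\mapsto \|z+tu\|-1$: it has at most two zeros (namely $0$ and the parameter $\alpha(z)$ yielding $\pi_u(z)=z+\alpha(z)u$), depends continuously on $z$, and its zero set varies continuously. The main technical obstacle is the limit behaviour at the (two) $u$-extreme points of $S_X$, where the two zeros merge; here I would use that convexity makes the sign-change structure of the perturbed function stable, so no ``extra'' intersection can appear near $0$ in the nearby lines.

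Regularity of $(u,v)$ now yields $\pi_u(z)\ne \pi_v(z)$ for every $z\in S_X$. Indeed, if $\pi_u(z)=\pi_v(z)=:p$, then $p-z\in \IR u\cap\IR v$; the first clause of regularity (linear independence of $u$ and $v$) forces $p=z$, so $z$ would be simultaneously $u$-extreme and $v$-extreme, contradicting the second clause. Therefore the formula
$$\Phi(z)=\frac{\pi_u(z)-\pi_v(z)}{\|\pi_u(z)-\pi_v(z)\|}$$
defines a continuous map $\Phi\colon S_X\to S_X$, and it is odd because $\pi_u$ and $\pi_v$ are.

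Finally, every continuous odd self-map of the topological circle $S_X$ is surjective. If $\Phi$ missed some $w\in S_X$, oddness would force it to miss $-w$ as well, so $\Phi(S_X)$ would lie inside the disconnected set $S_X\setminus\{w,-w\}$, whose two open arcs are exchanged by $z\mapsto -z$; connectedness of $S_X$ would confine $\Phi(S_X)$ to a single arc, while $\Phi(-z)=-\Phi(z)$ would put some image point in the opposite arc, a contradiction. Picking any $z\in S_X$ with $\Phi(z)=w$ and setting $x=\pi_u(z)$, $y=\pi_v(z)$ produces the desired triple, since by construction $x-z\in\IR u$, $y-z\in\IR v$, and $x-y=\|x-y\|\cdot w\in\IR w$.
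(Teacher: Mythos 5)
Your proof is correct and follows essentially the same route as the paper: you construct the same second-intersection maps (the paper's $\vec u(z),\vec v(z)$), form the same normalized difference map, and conclude by the same oddness-plus-connectedness argument on the topological circle $S_X$. The extra details you supply (continuity at the $u$-extreme points, the deduction of $\pi_u(z)\ne\pi_v(z)$ from regularity, and the surjectivity of an odd self-map of the circle) are exactly the steps the paper leaves implicit, and they are all sound.
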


\begin{proof} By the strict convexity of $X$, for every $z\in S_X$ there exist unique points $\vec u(z),\vec v(z)\in S_X$ such that $\{z,\vec u(z)\}=S_X\cap(z+\IR u)$ and $\{z,\vec v(z)\}=S_X\cap (z+\IR v)$. It is easy to see that the function 
$$\varphi:S_X\to X,\quad\varphi:z\mapsto \vec u(z)-\vec v(z),$$
is continuous. Since the pair $(u,v)$ is regular, $\varphi(z)\ne 0$ for any $z\in S_X$.
Then the function $$\psi:S_X\to S_X,\quad \psi:z\mapsto\frac{\varphi(z)}{\|\varphi(z)\|}=\frac{\vec u(z)-\vec v(z)}{\|\vec u(z)-\vec v(z)\|}$$is well-defined and continuous.

By the central symmetry of $S_X$, for every point $z\in S_X$ we have
$\vec u(-z)=-\vec u(z)$ and $\vec v(-z)=-\vec v(z)$, which implies $\psi(-z)=-\psi(z)$. 
Therefore, the image $\psi(S_X)$ of $S_X$ contains two opposite points. 
By the connectedness of $\psi(S_X)$, for any $w\in S_X$ there exists $z\in S_X$ such that $$w=\psi(z)=\frac{\vec u(z)-\vec v(z)}{\|\vec u(z)-\vec v(z)\|}.$$Then the points $x=\vec u(z)$, $y=\vec v(z)$ and $z$ have the required properties.
\end{proof} 

\begin{lemma}\label{l:reg} If the unit sphere of a strictly convex $2$-dimensional Banach space $X$ contains two linearly independent special points, then $S_X$ contains two special points $x,y$ such that the pair $(x,y)$ is regular.
\end{lemma}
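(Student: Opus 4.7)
The plan is to take the two given linearly independent special points, call them $u$ and $v$, and either use the pair $(u,v)$ itself or replace $v$ by a cleverly chosen non-smooth point of $S_X$. If $(u,v)$ is already regular, set $x=u$, $y=v$ and we are done. Otherwise $(u,v)$ is singular; since $u$ and $v$ are linearly independent we have $\{u,-u\}\cap\{v,-v\}=\emptyset$, so the definition of singularity supplies a point $z\in S_X$ with $S_X\cap(z+\IR u)=\{z\}=S_X\cap(z+\IR v)$. This means that both lines $z+\IR u$ and $z+\IR v$ are supporting lines of the unit ball at $z$, and since $u,v$ are linearly independent these two supporting lines are distinct, forcing $z$ to be a non-smooth point of $S_X$. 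By Proposition~\ref{p1}, $z$ is special, and the proposal is to take $(x,y)=(u,z)$.

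To check that $(u,z)$ is regular, I would first note that $u\ne\pm z$: otherwise the line $z+\IR u=\IR z$ passes through both $z$ and $-z$, contradicting $S_X\cap(z+\IR u)=\{z\}$; hence $\{u,-u\}\cap\{z,-z\}=\emptyset$. The substantive part is to rule out any $w\in S_X$ with $S_X\cap(w+\IR u)=\{w\}=S_X\cap(w+\IR z)$. For this I would invoke the following fact about a strictly convex $2$-dimensional Banach space: for every direction $e\in S_X$ the set of points of $S_X$ at which $\IR e$ is a supporting direction is a single antipodal pair $\{p,-p\}$. Granting this, the first equality forces $w\in\{z,-z\}$, since $u$ is already supporting at $z$; but then the second equality fails because the line $w+\IR z=\IR z$ meets $S_X$ in both $z$ and $-z$.

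The main (though modest) obstacle is the middle fact that in a strictly convex $2$-dimensional Banach space each direction has exactly one antipodal pair of supporting points. I would dispose of this by choosing a nonzero linear functional $f$ with $\ker f=\IR e$ and observing that, by strict convexity, $f$ attains its maximum on $B_X$ at a unique point $p_f\in S_X$; central symmetry then gives that the supporting locus in direction $e$ is exactly $\{p_f,-p_f\}$. Once this auxiliary fact is in place, the rest of the argument is a direct unwinding of the definitions of \emph{singular} and \emph{regular} pair together with a single appeal to Proposition~\ref{p1}.
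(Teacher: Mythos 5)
Your proof is correct and takes essentially the same route as the paper: when the given pair is singular, pass to the witnessing point $z$, note it is non-smooth and hence special by Proposition~\ref{p1}, and pair it with one of the original points. You additionally supply the verification that $(u,z)$ is regular (via the fact that in a strictly convex plane each direction supports the ball at exactly one antipodal pair), a step the paper merely asserts with a picture; your justification of that auxiliary fact is sound.
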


\begin{proof} Let $x,y\in S_X$ be two linearly independent special points on the unit sphere of $X$. If the pair $(x,y)$ is regular, then we are done. So, assume that $(x,y)$ is singular. Then there exists a point $z\in S_X$ such that $S_X\cap(z+\IR x)=\{z\}=S_X\cap (z+\IR y)$. It follows that $z$ is a non-smooth point of the sphere $S_X$. By Proposition~\ref{p1}, the point $z$ is special. Taking into account that the Banach space $X$ is strictly convex, it can be shown that the pairs $(x,z)$ and $(y,z)$ are regular.

\begin{picture}(200,210)(-200,-105)
\put(0,0){\circle*{3}}
\put(0,0){\vector(0,1){71}}
\put(0,0){\vector(1,0){62}}
\put(0,0){\vector(-1,-1){100}}

\put(67,-3){$x$}
\put(-3,78){$y$}
\put(-107,-107){$z$}

\thicklines

\qbezier(-101,-100)(-50,100)(100,100)
\qbezier(-101,-100)(50,-100)(100,100)
\end{picture}
\end{proof}

The following helpful fact was proved by Tingley in \cite{Tingley}.

\begin{lemma}\label{l:Tingley} If $f:S_X\to S_Y$ is a bijective isometry between unit spheres of finite-dimensional Banach spaces, then $f(-x)=-f(x)$ for all $x\in S_X$.
\end{lemma}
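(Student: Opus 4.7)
My plan is to exploit, in the $2$-dimensional setting that is the focus of the paper, the fact that the unit sphere carries a canonical intrinsic length structure which any ambient isometry automatically preserves.

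The first step is to record the obvious distance-maximality consequence of the hypothesis: for every $x\in S_X$ one has $\|f(x)-f(-x)\|=\|x-(-x)\|=2$, which is the ambient diameter of $S_Y$ (because $\|y-z\|\le\|y\|+\|z\|=2$ for $y,z\in S_Y$). Thus $f(-x)$ belongs to the ``diametrical set'' $\{z\in S_Y:\|z-f(x)\|=2\}$, alongside $-f(x)$; the task is to identify these two points with each other.

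I would then pass from the ambient metric to the intrinsic arc-length metric $\breve d$ introduced before Lemma~\ref{l:intrinsic}, extended from $\hS_X$ to all of $S_X$ by the same sup-of-infima recipe; by Lemma~\ref{l:r}(1) this makes $S_X$ a metric circle of total intrinsic length $2L_X$, and likewise $S_Y$ a metric circle of length $2L_Y$. Because $f$ preserves every ambient norm $\|x_{i+1}-x_i\|$ appearing in the definition of $\breve d$, it preserves every chain-sum and hence the intrinsic metric. But a bijective intrinsic isometry between metric circles forces their lengths (twice their diameters) to coincide, so $L_X=L_Y=:L$, and $f$ corresponds to either a rotation or a reflection of the common parameter circle $\mathbb{R}/2L\mathbb{Z}$. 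Each such symmetry commutes with the half-period shift $s\mapsto s+L$, which via $\br$ is precisely the ambient antipodal map by Lemma~\ref{l:r}(1). Hence $f(-x)=-f(x)$.

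The main obstacle I anticipate is the countable exceptional set where $\br$ fails to be differentiable: there one cannot realize the intrinsic metric as an arc-length integral, and must lean on the robust sup-infimum definition, whose $f$-invariance is then immediate. A secondary subtlety is that Lemma~\ref{l:Tingley} is stated in arbitrary finite dimension, where this intrinsic-circle trick is not available; in that generality one would instead consider the involutive isometry $\sigma:=f^{-1}\circ(-\mathrm{id}_{S_Y})\circ f$ of $S_X$, observe that $\|\sigma(x)-x\|=2$ for every $x\in S_X$, and rule out $\sigma\neq -\mathrm{id}_{S_X}$ by a compactness-plus-convexity argument applied to the convex faces $\{z\in S_X:\|z-x\|=2\}$.
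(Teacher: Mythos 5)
The paper does not prove this lemma at all: it is quoted from Tingley's 1987 paper, where the (genuinely nontrivial) proof appears. So the real question is whether your argument would serve as a self-contained substitute, and there it falls short in two places.

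First, your two-dimensional intrinsic-metric argument is an attractive and genuinely different route, and its skeleton is sound: $f$ preserves the chain-sums defining $\breve d$, self-isometries of a metric circle are rotations or reflections, and both commute with the half-period shift, which by Lemma~\ref{l:r}(1) is the antipodal map. But the load-bearing step is that $(S_X,\breve d)$ really \emph{is} the metric circle $\IR/2L\mathbb{Z}$ with $\br$ as an isometric identification, i.e.\ that $\breve d(\br(a),\br(b))=\min\{|a-b|,\,2L-|a-b|\}$ with no smoothness hypothesis. The paper's Lemma~\ref{l:intrinsic} asserts this only when $\br$ is continuously differentiable on $(0,L)$, and the authors explicitly retreat to the weaker Lemma~\ref{l:d} "if $\br$ is arbitrary." Your remark that one "leans on the robust sup-infimum definition, whose $f$-invariance is then immediate" answers the wrong worry: $f$-invariance of $\breve d$ was never in doubt; what must be proved is the identification of $(S_X,\breve d)$ with the arc-length circle at the (countably many, possibly dense) non-smooth parameters. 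This is true for convex curves but requires an argument the paper deliberately does not supply.

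Second, the lemma is stated for arbitrary finite-dimensional spaces, and there your proposal reduces to "rule out $\sigma\neq-\mathrm{id}_{S_X}$ by a compactness-plus-convexity argument applied to the convex faces $\{z\in S_X:\|z-x\|=2\}$." That sentence is not a proof; it is a restatement of the theorem. Note also that $\{z\in S_X:\|z-x\|=2\}=\{z:[z,-x]\subseteq S_X\}$ is the union of all faces through $-x$ and need not be convex; handling these sets when the sphere contains segments is precisely the hard content of Tingley's argument. (It is worth observing that the paper only ever invokes the lemma after reducing to the strictly convex case, where a two-line proof works: $\|f(x)-f(-x)\|=2$, and applying strict convexity of $X$ to $f^{-1}$ shows each point of $S_Y$ has at most one point at distance $2$, forcing $f(-x)=-f(x)$. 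If you restrict the claim accordingly, your difficulties disappear.)
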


\begin{proposition}\label{p2} A $2$-dimensional Banach space $X$ has the Mazur--Ulam property if its sphere contains two linearly independent special points.
\end{proposition}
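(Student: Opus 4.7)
The plan is to reduce to strictly convex $X$, promote the hypothesized special pair to a regular one via Lemma~\ref{l:reg}, build a candidate linear isometry $F:X\to Y$ by sending the two special points to their $f$-images, verify $F$ is a linear isometry using Lemma~\ref{l:t}, and finally show $F|_{S_X}=f$ by proving that the self-isometry $F^{-1}\circ f$ is so rigidified by the natural parameterization that its fixed points $u,v$ force it to be the identity.

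Assume $f:S_X\to S_Y$ is a bijective isometry. If $X$ is not strictly convex, Theorem~\ref{t:CS} already yields the Mazur--Ulam property, so assume $X$ is strictly convex and apply Lemma~\ref{l:reg} to choose linearly independent special points $u,v\in S_X$ forming a regular pair. Since $S_X$ is compact, so is $S_Y=f(S_X)$, forcing $Y$ to be finite-dimensional; the homeomorphism $S_Y\cong S_X\cong S^1$ then gives $\dim Y=2$. From $u\ne\pm v$, the isometry $f$ together with Lemma~\ref{l:Tingley} yields $f(u)\ne\pm f(v)$, so $f(u),f(v)$ are linearly independent in $Y$, and I define $F:X\to Y$ to be the unique linear isomorphism with $F(u)=f(u)$ and $F(v)=f(v)$.

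To check that $F$ is an isometry, fix $w\in S_X\setminus\{\pm u,\pm v\}$ and, using regularity of $(u,v)$, apply Lemma~\ref{l:t} to obtain $x,y,z\in S_X$ and nonzero scalars $\alpha,\beta,\gamma\in\IR$ with $x-z=\alpha u$, $y-z=\beta v$ and $x-y=\alpha u-\beta v=\gamma w$, so that $w=(\alpha/\gamma)u-(\beta/\gamma)v$. Specialness of $u$ and $v$ gives $f(x)-f(z)=\alpha f(u)$ and $f(y)-f(z)=\beta f(v)$, whence $f(x)-f(y)=\alpha f(u)-\beta f(v)=\gamma F(w)$. Comparing norms yields $|\gamma|=\|x-y\|=\|f(x)-f(y)\|=|\gamma|\cdot\|F(w)\|$, so $\|F(w)\|=1$; thus $F(S_X)\subseteq S_Y$, and being linear $F$ is an isometry.

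To show $F|_{S_X}=f$, set $g:=F^{-1}\circ f:S_X\to S_X$, a bijective self-isometry fixing $u$ and $v$. Let $\br:\IR\to S_X$ be the natural parameterization from Lemma~\ref{l:r}, of period $2L$, and write $g\circ\br=\br\circ g_*$ for the induced continuous bijection $g_*$ of $\IR/2L\mathbb{Z}$ with monotonic lift $\tilde g_*:\IR\to\IR$. Applying Lemma~\ref{l:d} to both sides of the identity $\|\br(g_*(s+\e))-\br(g_*(s))\|=\|\br(s+\e)-\br(s)\|$ forces $|\tilde g_*(s+\e)-\tilde g_*(s)|=(1+o(1))|\e|$ for small $\e$, so $\tilde g_*$ has metric derivative identically $1$; together with monotonicity this forces $\tilde g_*(s)=s+c$ or $\tilde g_*(s)=-s+c$ for some $c\in\IR$. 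Fixing $\br(0)=u$ forces $c=0$; writing $v=\br(s_v)$, we have $s_v\notin\{0,L\}$ (since $v\ne\pm u$ and $\br(L)=-u$ by Lemma~\ref{l:r}(1)), so the reflection case would violate $g_*(s_v)=s_v$. Hence $g_*=\mathrm{id}$, $g=\mathrm{id}_{S_X}$, and $f=F|_{S_X}$ extends to the linear isometry $F$. The main obstacle is the passage from the pointwise first-order expansion of Lemma~\ref{l:d} to the global rigidity of $g_*$ as a parameter-circle isometry.
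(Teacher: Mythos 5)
Your proposal is correct, and it follows the paper's own argument for the first two stages: reduction to the strictly convex case via Theorem~\ref{t:CS}, upgrading the special pair to a regular one via Lemma~\ref{l:reg}, defining the linear map $F$ by $F(u)=f(u)$, $F(v)=f(v)$, and using Lemma~\ref{l:t} plus the special property to conclude $\|F(w)\|=1$ for all $w\in S_X$ (the paper is slightly more careful here, splitting into the cases $\alpha>0$ and $\alpha<0$ because the definition of a special point is stated for normalized differences $y-z=\|y-z\|\cdot x$ with a positive coefficient; you should do the same, but this is routine). Where you genuinely diverge is the last step, showing $g=F^{-1}\circ f=\mathrm{id}_{S_X}$. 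The paper invokes Lemma~\ref{l:Tingley} to get $g(\pm u)=\pm u$, $g(\pm v)=\pm v$, and then uses strict convexity to argue that every $x\in S_X$ is the unique point of $(u+\|x-u\|S_X)\cap(-u+\|x+u\|S_X)\cap(v+\|x-v\|S_X)\cap(-v+\|x+v\|S_X)$, forcing $g(x)=x$. You instead lift $g$ through the natural parameterization to a strictly monotone self-homeomorphism $\tilde g_*$ of $\IR$ and apply Lemma~\ref{l:d} on both sides of $\|\br(\tilde g_*(s+\e))-\br(\tilde g_*(s))\|=\|\br(s+\e)-\br(s)\|$ to get $|\tilde g_*(s+\e)-\tilde g_*(s)|=(1+o(1))|\e|$ pointwise; with monotonicity this gives $\tilde g_*'\equiv\pm1$ everywhere, hence $\tilde g_*(s)=\pm s+c$ by the mean value theorem, and the two fixed points $u=\br(0)$ and $v=\br(s_v)$ with $s_v\notin L\mathbb{Z}$ kill both the translation and the reflection. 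This is a legitimate alternative: it trades the paper's four-sphere intersection argument (whose uniqueness claim itself needs a small strict-convexity verification left implicit) for a covering-space rigidity argument that only uses Lemma~\ref{l:d}; the one step worth spelling out is the passage from the pointwise first-order estimate to global affinity of $\tilde g_*$, which is exactly the ``main obstacle'' you flag and which the mean value theorem settles once everywhere-differentiability with derivative of constant sign $\pm1$ is established.
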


\begin{proof} If $X$ is not strictly convex, then $X$ has the Mazur--Ulam property by Theorem~\ref{t:CS}. So, we assume that $X$ is strictly convex. Let $u,v\in S_X$ be two linearly independent special points on $S_X$. By Lemma~\ref{l:reg}, we can additionally assume that the pair $(u,v)$ is regular.  

Let $f:S_X\to S_Y$ be a bijective isometry of $S_Y$ onto the unit sphere $S_Y$ of an arbitrary Banach space $Y$. Let $L:X\to Y$ be a unique linear operator such that $L(u)=f(u)$ and $L(v)=f(v)$.  We claim that $L{\restriction}S_X=f$. Fix any point $w\in S_X$ and using Lemma~\ref{l:t}, find points $x,y,z\in S_X$ such that $x=z+au$, $y=z+bv$ and $x-y\in c w$ for some real numbers $a,b,c$. Replacing $x,y,z$ by $-x,-y,-z$, we can assume that $c>0$. Observe that $|a|=\|x-z\|$. If $a>0$, then by the special property of $u$, the equality $x=z+au=z+\|x-z\|u$ implies $f(x)=f(z)+\|x-z\|\cdot f(u)=a\cdot f(u)$. If $a<0$, then the equality $x=z+au=z-\|x-z\|u$ implies $z=x+\|x-z\|u$. By the special property of $u$, we have $f(z)=f(x)+\|x-z\|\cdot f(u)=f(x)-a\cdot f(u)$ and hence $f(x)=f(z)+a\cdot f(u)$. In both cases we obtain $f(x)=f(z)+a\cdot f(u)$. By analogy we can show that $f(y)=f(z)+b\cdot f(v)$. 
Then
$$f(x)-f(y)=(f(z)+a\cdot f(u))-(f(z)+b\cdot f(v))=a\cdot f(u)-b\cdot f(v)=a\cdot L(u)-b\cdot L(v).$$
It follows from $x-y=c\cdot w=\|x-y\|\cdot w$ that 
$$L(w)=\frac{L(x)-L(y)}{\|x-y\|}=\frac{L(z+au)-L(z+bv)}{\|x-y\|}=\frac{aL(u)-bL(v)}{\|x-y\|}=\frac{f(x)-f(y)}{\|x-y\|}$$and finally
$$\|L(w)\|=\frac{\|f(x)-f(y)\|}{\|x-y\|}=1.$$
Therefore, the linear operator $L:X\to Y$ is an isometry.

Consider the isometry $g=L^{-1}\circ f:S_X\to S_X$ and observe that $g(u)=u$ and $g(v)=v$. By Lemma~\ref{l:Tingley}, $g(-u)=-g(u)=-u$ and $g(-v)=-g(v)=-v$. Since the space $X$ is strictly convex, for any distinct points $x,y\in X$ and positive real numbers $a,b$ the intersection $(x+aS_X)\cap (y+bS_X)$ contains at most two distict points. This fact can be used to show that each point $x\in S_X$ is the unique point of the intersection $$(u+\|x-u\|S_X)\cap (-u+\|x+u\|S_X)\cap (v+\|x-v\|S_X)\cap(-v+\|x+v\|S_X)$$ which implies that $g(x)=x$ and hence $f=L{\restriction}S_X$.
\end{proof}

\section{Banach spaces with exactly two non-smooth points on the unit sphere}

Let $X$ be a strictly convex 2-dimensional Banach space whose unit sphere contains exactly two non-smoth points. Let $\mathbf e_1\in S_X$ be one of these non-smooth points.
Take any vector $\mathbf e_2\in X$ which is linearly independent with $\mathbf e_1$ and consider the natural parametrization $\br:\IR\to S_X$ of the 2-based Banach space $(X,\mathbf e_1,\mathbf e_2)$. For this parameterization we have $\br(0)=\mathbf e_1$ and $\br'_-(0)\ne \br'_+(0)$ as $\mathbf e_1=\br(0)$ is a non-smooth point of the unit sphere. Replacing the vector $\mathbf e_2$ by $\br'_\pm(0)=\frac12(\br'_-(0)+\br'_+(0))$, we can assume that $\mathbf e_2=\br'_\pm(0)$. 

We recall that $$\tfrac12(\br'_+(0)-\br'_-(0))=\jj(0)\cdot \br(0)+\jjj(0)\cdot \br'_\pm(0)=\jj(0)\cdot \mathbf e_1+\jjj(0)\cdot\mathbf e_2$$ and the numbers $\jj(0),\jjj(0)$ are called {\em radial} and {\em tangential jumps} of the derivative $\br'$ at zero. By Lemma~\ref{l:j}, $|\jjj(0)|<1$. We claim that those jumps are determined by the metric of the unit sphere. 

For every point $x\in S_X$ let $\overline x$ be the unique point of the sphere such that $\{x,\overline x\}=S_X\cap (x+\IR\mathbf e_1)$. The uniqueness of $\overline x$ follows from the strict convexity of $X$. 

\begin{lemma}\label{l:jj} 
$$\lim_{\e\to+0}\frac{\|\br(\e)-\br(0)\|}{\|\overline{\br(\e)}+\br(0)\|}=\frac{1-\jjj(0)}{1+\jjj(0)}\quad\mbox{and}\quad\lim_{\e\to+0}\frac{\|\br(\e)-\overline{\br(\e)}\|-2}{2\e}=\frac{\jj(0)}{1-\jjj(0)}.$$
\end{lemma}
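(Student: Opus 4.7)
The plan is to expand $\br$ to first order around $0$ in the basis $(\mathbf e_1,\mathbf e_2)=(\br(0),\br'_\pm(0))$, locate $\overline{\br(\e)}$ on the opposite side of $S_X$ via the central symmetry $\br(L+\delta)=-\br(\delta)$ from Lemma~\ref{l:r}(1), and then read both limits off the resulting coordinate expressions.

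First I would record the one-sided Taylor expansions at $0$. In the chosen basis, $\br'_+(0)=\jj(0)\mathbf e_1+(1+\jjj(0))\mathbf e_2$ and $\br'_-(0)=-\jj(0)\mathbf e_1+(1-\jjj(0))\mathbf e_2$, so for small $\delta>0$ the coordinates of $\br(\delta)$ are $\bigl(1+\jj(0)\delta,\;(1+\jjj(0))\delta\bigr)+o(\delta)$, while for small $\delta<0$ they are $\bigl(1-\jj(0)\delta,\;(1-\jjj(0))\delta\bigr)+o(\delta)$.

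Next I would locate $\overline{\br(\e)}$. By definition it is the second point where the line $\br(\e)+\IR\mathbf e_1$ meets $S_X$, so its $\mathbf e_2$-coordinate equals that of $\br(\e)$, namely $(1+\jjj(0))\e+o(\e)>0$; by strict convexity this intersection lies near $-\br(0)=-\mathbf e_1$. Writing $\overline{\br(\e)}=-\br(\delta)$ via Lemma~\ref{l:r}(1) and matching $\mathbf e_2$-coordinates, the required positivity forces $\delta<0$, and solving (using $|\jjj(0)|<1$ from Lemma~\ref{l:j}(1) to guarantee a nonzero denominator) gives $\delta=-\tfrac{1+\jjj(0)}{1-\jjj(0)}\e+o(\e)$. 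Substituting back yields the $\mathbf e_1$-coordinate of $\overline{\br(\e)}$ as $-1-\jj(0)\tfrac{1+\jjj(0)}{1-\jjj(0)}\e+o(\e)$.

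The two limits then drop out. Adding $\br(0)$ gives $\overline{\br(\e)}+\br(0)=\tfrac{1+\jjj(0)}{1-\jjj(0)}\e\cdot\br'_-(0)+o(\e)$, whose norm is $\tfrac{1+\jjj(0)}{1-\jjj(0)}\e+o(\e)$ because $\|\br'_-(0)\|=1$ by Lemma~\ref{l:r}(4); together with $\|\br(\e)-\br(0)\|=\e+o(\e)$, the first limit follows. For the second, $\br(\e)-\overline{\br(\e)}$ lies in $\IR\mathbf e_1$ by the very definition of $\overline{\cdot}$, and subtracting the coordinate expansions collapses it to $\bigl(2+\tfrac{2\jj(0)}{1-\jjj(0)}\e+o(\e)\bigr)\mathbf e_1$; since $\|\mathbf e_1\|=1$ and the scalar is positive for small $\e$, this is also its norm, and the second limit reads off at once. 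The one genuinely delicate point is the sign selection for $\delta$ when inverting the parametrization near $-\mathbf e_1$: the asymmetry between $\br'_+(0)$ and $\br'_-(0)$ is exactly what creates the ratio $\tfrac{1+\jjj(0)}{1-\jjj(0)}$, so one must take care to use $\br'_-$ (not $\br'_+$) there.
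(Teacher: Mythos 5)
Your proof is correct and follows essentially the same route as the paper: first-order one-sided expansions of $\br$ at $0$ in the basis $(\mathbf e_1,\br'_\pm(0))$, locating $\overline{\br(\e)}$ as $-\br(\delta)$ near $-\mathbf e_1$ and solving for $\delta$ by matching $\mathbf e_2$-coordinates (your $\delta<0$ is just the paper's $-\delta$ with $\delta>0$), then reading both limits off the resulting coordinates. The sign discussion and the use of $\|\br'_\pm(0)\|=1$ and $|\jjj(0)|<1$ are exactly the points the paper relies on as well.
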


\begin{proof} For a small positive $\e$, find a positive $\delta$ such that $\overline{\br(\e)}=-\br(-\delta)$. Observe that
\begin{multline*}
\br(\e)=\br(0)+\br'_+(0)\e+o(\e)=\mathbf e_1+(\jj(0)\mathbf e_1+(1+\jjj(0))\mathbf e_2)\e+o(\e)=\\
=(1+\jj(0)\e+o(\e))\mathbf e_1+(1+\jjj(0)+o(1))\e\mathbf e_2
\end{multline*}
and
\begin{multline*}
\br(-\delta)=\br(0)-\br'_-(0)\delta+o(\delta)=\mathbf e_1-(-\jj(0)\mathbf e_1+(1-\jjj(0))\mathbf e_2)\delta+o(\delta)=\\
=(1+\jj(0)\delta+o(\delta))\mathbf e_1-(1-\jjj(0)+o(1))\delta\mathbf e_2.
\end{multline*}
The equality $-\br(-\delta)=\overline{\br(\e)}$ implies $(1+\jjj(0)+o(1))\e=(1-\jjj(0)+o(1))\delta$ and $$\frac\e\delta=\frac{1-\jjj(0)+o(1)}{1+\jjj(0)+o(1)}.$$
Then
$$\lim_{\e\to+0}\frac{\|\br(\e)-\br(0)\|}{\|\overline{\br(\e)}+\br(0)\|}=\lim_{\e\to+0}\frac{\|(\br'_+(0)+o(1))\e)\|}{\|{-}\br(-\delta)+\br(0)\|}=\lim_{\e\to+0}\frac{\|\br'_+(0){+}o(1)\|\cdot|\e|}{\|(\br'_-(0)+o(1))\delta\|}=\lim_{\e\to+0}\frac{|\e|}{|\delta|}=\frac{1-\jjj(0)}{1+\jjj(0)}.$$

On the other hand,
$$
\begin{aligned}
&\|\br(\e)-\overline{\br(\e)}\|=\|\br(\e)+\br(-\delta)\|=\\
&=\|(1+\jj(0)\e+o(\e))\mathbf e_1+(1+\jjj(0)+o(1))\e\mathbf e_2+(1+\jj(0)\delta+o(\delta))\mathbf e_1-(1-\jjj(0)+o(1))\delta\mathbf e_2\|=\\
&=\|(2+\jj(0)(\e+\delta)+o(\e+\delta))\mathbf e_1\|=2+(\jj(0)+o(1))\e\Big(1+\frac{1+\jjj(0)+o(1)}{1-\jjj(0)+o(1)}\Big)=2+2\e\frac{\jj(0)+o(1)}{1-\jjj(0)}
\end{aligned}
$$
and hence
$$\lim_{\e\to+0}\frac{\|\br(\e)-\overline{\br(\e)}\|-2}{2\e}=\frac{\jj(0)}{1-\jjj(0)}.$$
\end{proof}

\begin{lemma}\label{l:xy} Let $s,\bar s\in\IR$ be two distinct real numbers such that $0\ne\br(s)-\br(\bar s)=\|\br(s)-\br(\bar s)\|\cdot\mathbf e_1$ and $\br(s)\notin\{\mathbf e_1,-\mathbf e_1\}$. Let $\br'(s)=x\mathbf e_1+y\mathbf e_2$ and $\br'(\bar s)=\overline x\mathbf e_1+\overline y\mathbf e_2$ for some real numbers $x,y,\overline x,\overline y$. For a small real number $\e$ let $\bar\e$ be the unique small real number such that $\br(\bar s+\bar\e)=\overline{\br(s+\e)}$. Then
\begin{enumerate}
\item $y>0$ and $\overline y<0$;
\item $\lim\limits_{\e\to 0}\dfrac{\|\br(\bar s+\bar\e)-\br(\bar s)\|}{\|\br(s+\e)-\br(s)\|}=-\dfrac{y}{\overline y}$;
\item $\lim\limits_{\e\to0}\dfrac{\|\br(s+\e)-\br(\bar s+\bar \e)\|-\|\br(s)-\br(\bar s)\|}{\e}=x-\overline x\cdot\dfrac{y}{\overline y}$;
\item $\lim\limits_{\e\to+0}\dfrac{\|\br(s+\e)-\br(\bar s)\|-\|\br(s)-\br(\bar s)\|}{\e}=x-\dfrac{\jj(0)\cdot y}{1+\jjj(0)}$;
\item $\lim\limits_{\e\to-0}\dfrac{\|\br(s+\e)-\br(\bar s)\|-\|\br(s)-\br(\bar s)\|}{\e}=x+\dfrac{\jj(0)\cdot y}{1-\jjj(0)}$;
\item The numbers $x,y,\overline x,\overline y$ are uniquely determined by the equations \textup{(2)--(5)}.
\end{enumerate}
\end{lemma}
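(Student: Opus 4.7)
The overall approach is to push first-order Taylor expansions of $\br$ at $s$ and at $\bar s$ through the collinearity constraint that defines $\bar\e$, and to fall back on Lemma~\ref{l:a} for the one-sided norm asymptotics. Because $\br(0) = \mathbf e_1$ and $\mathbf e_2 = \br'_\pm(0)$, the expression $\br'(s) = x\mathbf e_1 + y\mathbf e_2$ is the same as $x\cdot\br(0) + y\cdot\br'_\pm(0)$, and the hypothesis $\br(s)-\br(\bar s) = \|\br(s)-\br(\bar s)\|\cdot\br(0)$ fits the hypothesis of Lemma~\ref{l:a} with $a=\bar s$, $b=s$. Strict convexity of $X$ (which is in force in Section~\ref{s:special} and also in the present setting) forces $\br'(s)$ to be non-parallel to $\mathbf e_1$, since otherwise the line $\br(s)+\IR\mathbf e_1$ would be tangent to $S_X$ at $\br(s)$ while also meeting it at $\br(\bar s)$; hence $y \neq 0$ and Lemma~\ref{l:a} yields $y > 0$. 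Swapping the roles of $s$ and $\bar s$ and using $\br(L) = -\mathbf e_1$, $\br'_\pm(L) = -\mathbf e_2$ from Lemma~\ref{l:r}(1), one rewrites $\br'(\bar s) = (-\overline x)\br(L) + (-\overline y)\br'_\pm(L)$; Lemma~\ref{l:a} then gives $-\overline y > 0$, completing part~(1).

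For (2) and (3), expand both endpoints and subtract:
$$\br(s+\e)-\br(\bar s+\bar\e) = \|\br(s)-\br(\bar s)\|\mathbf e_1 + (x\e-\overline x\bar\e)\mathbf e_1 + (y\e-\overline y\bar\e)\mathbf e_2 + o(|\e|+|\bar\e|).$$
The left-hand side must lie in $\IR\mathbf e_1$ by the defining property of $\overline{\br(s+\e)}$, so the vanishing of its $\mathbf e_2$-coefficient forces $\bar\e = (y/\overline y)\e + o(\e)$; in particular $|\bar\e| = O(|\e|)$, so the $o(|\e|+|\bar\e|)$ and $o(\e)$ error terms become interchangeable. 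Lemma~\ref{l:d} applied at both $s$ and $\bar s$ then yields part~(2), with the correct positive sign of the ratio $-y/\overline y$ coming from $y>0>\overline y$; reading off the $\mathbf e_1$-coefficient of the same expansion and using again $\bar\e = (y/\overline y)\e + o(\e)$ yields part~(3). Parts~(4) and (5) are immediate from Lemma~\ref{l:a} with $a=\bar s$, $b=s$, the lemma's parameter set to $0$, and the identification $\br'(s) = x\cdot\br(0)+y\cdot\br'_\pm(0)$: specialising $\sign(\e)$ to $+1$ and $-1$ produces the two stated one-sided limits verbatim.

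For (6), the plan is to invert (4)--(5) for $(x,y)$ and then use (2)--(3) to recover $(\overline x,\overline y)$. Because $\br(0) = \mathbf e_1$ is, by the standing hypothesis of this section, one of the two non-smooth points of $S_X$, Lemma~\ref{l:j}(2)--(3) gives $\jj(0)<0$; combined with $|\jjj(0)|<1$ from Lemma~\ref{l:j}(1), subtracting the right-hand side of (4) from that of (5) leaves a term $2\jj(0)y/(1-\jjj(0)^2)$ whose coefficient of $y$ is nonzero. Hence $y$ is determined by (4) and (5), and then so is $x$; once $x,y$ are known, the positive limit in (2) together with the known positivity of $y$ determines $\overline y$, after which the limit in (3) determines $\overline x$. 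The step I expect to be most delicate is the proof of (2)--(3): one must first establish $\bar\e = (y/\overline y)\e + o(\e)$ rigorously in order to be entitled to treat $o(|\e|+|\bar\e|)$ as $o(\e)$, and this circular-looking bootstrap is where signs and little-$o$ bookkeeping have to be handled with care. Everything else in the lemma is then essentially formal.
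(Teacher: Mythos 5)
Your proposal is correct and follows essentially the same route as the paper's proof: first-order expansions at $s$ and $\bar s$ combined with the collinearity constraint to get $\bar\e=(y/\overline y)\e+o(\e)$ for parts (2)--(3), a direct application of Lemma~\ref{l:a} for parts (4)--(5), and the nonvanishing determinant $2\jj(0)y/(1-\jjj(0)^2)$ for part (6). The only difference is that you spell out details the paper leaves implicit (the strict-convexity argument for $y\ne0$ and the role-swap via $\br(L)=-\mathbf e_1$, $\br'_\pm(L)=-\mathbf e_2$ for $\overline y<0$), which is a welcome addition rather than a deviation.
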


\begin{proof}  It follows from $0\ne\br(s)-\br(\bar s)=\|\br(s)-\br(\bar s)\|\cdot\mathbf e_1$ that $y>0$ and $\overline y<0$.

For a small number $\e$ we have
$$\br(s+\e)-\br(s)=(\br'(s)+o(1))\e=(x+o(1))\e\mathbf e_1+(y+o(1))\e\mathbf e_2$$and
$$\br(\bar s+\bar\e)-\br(\bar s)=\br'(\bar s)\bar\e+o(\bar\e)=(\overline x+o(1))\bar\e\mathbf e_1+(\overline y+o(1))\bar\e\mathbf e_2.$$
The equality $\overline{\br(s+\e)}=\br(\bar s+\bar\e)$ implies $$(y+o(1))\e=(\overline y+o(1))\bar\e$$ and then
$$\lim_{\e\to 0}\frac{\|\br(\bar s+\bar\e)-\br(\bar s)\|}{\|\br(s+\e)-\br(s)\|}=\lim_{\e\to0}\frac{\|\br'(\bar s)+o(1)\|\cdot|\bar\e|}{\|\br'(s)+o(1)\|\cdot|\e|}=\lim_{\e\to0}\frac{|\bar \e|}{|\e|}=\lim_{\e\to0}\frac{|y+o(1)|}{|\overline y+o(1)|}=\frac{|y|}{|\overline y|}=-\frac{y}{\overline y}.$$

Also
$$
\begin{aligned}
&\br(s+\e)-\br(\bar s+\bar\e)=\br(s)+\br'(s)\e+o(\e)-\big(\br(\bar s)+\br'(\bar s)+o(\bar\e)\big)=\\
&=(\br(s)-\br(\bar s))+(x+o(1))\e\mathbf e_1+(y+o(1))\e\mathbf e_2-(\overline x+o(1))
\bar\e\mathbf e_1-(\overline y+o(1))\bar \e\mathbf e_2=\\
&=\|\br(s)-\br(\bar s)\|\mathbf e_1+(x\e-\overline x\bar\e+o(\e+\bar \e))\cdot\mathbf e_1=\\
&=\big(\|\br(s)-\br(\bar s)\|+(x-\overline x\cdot\frac{y}{\overline y}+o(1))\e\big)\cdot\mathbf e_1
\end{aligned}
$$
and hence
$$\lim_{\e\to0}\frac{\|\br(s+\e)-\br(\bar s+\bar \e)\|-\|\br(s)-\br(\bar s)\|}{\e}=x-\overline x\cdot\frac{y}{\overline y}.
$$
By Lemma~\ref{l:a}, 
$$\|\br(s+\e)-\br(\bar s)\|=\|\br(s)-\br(\bar s)\|+x\e-\frac{\sign(\e)\cdot\jj(0)\cdot y}{1+\sign(\e)\jjj(0)}\e+o(\e)$$and hence
$$\lim_{\e\to+0}\frac{\|\br(s+\e)-\br(\bar s)\|-\|\br(s)-\br(\bar s)\|}{\e}=x-\frac{\jj(0)\cdot y}{1+\jjj(0)}$$and
$$\lim_{\e\to-0}\frac{\|\br(s+\e)-\br(\bar s)\|-\|\br(s)-\br(\bar s)\|}{\e}=x+\frac{\jj(0)\cdot y}{1-\jjj(0)}.$$

Therefore, the items (1)--(5) of Lemma~\ref{l:xy} are proved.
\smallskip

The equations (4),(5) determine the numbers $x,y$ uniquely because 
$$\left|\begin{array}{cc}
1&-\frac{\jj(0)\cdot y}{1+\jjj(0)}\\
1&\frac{\jj(0)\cdot y}{1-\jjj(0)}
\end{array}
\right|=\frac{\jj(0)\cdot y}{1-\jjj(0)}+\frac{\jj(0)\cdot y}{1+\jjj(0)}=\frac{2\cdot\jj(0)\cdot y}{1-\jjj(0)^2}\ne0.
$$
The equation (2) allows us to find $\overline y$ and then $\overline x$ can be found from the equation (3).
\end{proof}

\section{Proof of Theorem~\ref{t:main}}

Given any non-smooth $2$-dimensional Banach space, we should prove that $X$ has the Mazur--Ulam property. If $X$ is not strictly convex, then $X$ has the Mazur--Ulam property by Theorem~\ref{t:CS}. If the sphere $S_X$ contains more that two non-smooth points, then $X$ has the Mazur--Ulam property by Propositions~\ref{p1} and \ref{p2}. So, we assume that $X$ is strictly convex and $S_X$ contains exactly two non-smooth points. Let $\mathbf e_1$ be one of them. Then $-\mathbf e_1$ is the other non-smooth point of $X$.

Take any vector $\mathbf e_2\in X\setminus(\IR\cdot\mathbf e_1)$ and consider the natural parameterization $\br:\IR\to X$ of the $2$-based Banach space $(X,\mathbf e_1,\mathbf e_2)$. Since $\mathbf e_1$ is a non-smooth point of $S_X$, the one-sided derivatives $\br'_-(0)$ and $\br'_+(0)$ are distinct. Replacing the vector $\mathbf e_2$ by $\br'_\pm(0)$, we can assume that $\mathbf e_2=\br'_\pm(0)$. Let $L=\min\{s\in[0,\infty):\br(s)=-\mathbf e_1\}$ be the half-length of the sphere $S_X$,  and $\breve S_X=\br([0,L])$ be the upper half-sphere of $X$. By Lemmas~\ref{l:r}(5) and \ref{l:intrinsic}, the restriction $\br{\restriction}_{[0,L]}:[0,L]\to \breve S_X$ in an isometry of $[0,L]$ onto the half-sphere $\breve S_X$ endowed with the intrinsic metric.

To show that the space $X$ has the Mazur--Ulam property, fix any bijective isometry $f:S_X\to S_Y$ of $S_X$ onto the unit sphere of an arbitary Banach space $Y$. It is clear that the space $Y$ is $2$-dimensional. Lemma~\ref{l:ns} implies that $\tilde {\mathbf e}_1=f(\mathbf e_1)$ and $-\tilde{\mathbf e}_1$ are unique non-smooth points of the sphere $S_Y$. Repeating the above argument, we can find a vector $\tilde{\mathbf e}_2\in Y\setminus(\IR\cdot \tilde{\mathbf e}_1)$ such that for the natural parameterization $\tilde \br:\IR\to Y$ of the $2$-based Banach space $(Y,\tilde{\mathbf e}_1,\tilde{\mathbf e}_2)$ we have $\tilde \br'_\pm(0)=\tilde{\mathbf e}_2$. Replacing $\tilde{\mathbf e}_2$ by $-\tilde{\mathbf e}_2$ we can additionally assume that $\tilde\br(\e)\in f(\breve S_X)$ for small positive numbers $\e$. 

 Then $f(\breve S_X)$ coincides with the half-sphere $\breve S_Y$ of the $2$-based Banach space $(Y,\tilde{\mathbf e_1},\tilde{\mathbf e}_2)$. Since $\breve S_X$ is isometric to $\breve S_Y$, the spheres $S_X$ and $S_Y$ have the same half-length. By Lemma~\ref{l:intrinsic}, the restriction $\tilde\br{\restriction}_{[0,L]}:[0,L]\to \breve S_Y$ is an isometry of $[0,L]$ onto the half-sphere $\breve S_Y$ endowed with the intrinsic metric. The isometry $f{\restriction}_{\breve S_X}:\breve S_X\to\breve S_Y$ remains an isometry with respect to the intrinsic metrics on the half-spheres. Then $(\tilde\br{\restriction}_{[0,L]})^{-1}\circ f\circ\br{\restriction}_{[0,L]}$ is an isometry of $[0,L]$ which does not move zero and hence is an identity map of $[0,L]$. Consequently, $\tilde\br(s)=f\circ\br(s)$ for all $s\in[0,L]$. Using this fact and Lemma~\ref{l:r}(1), we can show that $\tilde \br(s)=f\circ \br(s)$ for all $s\in\IR$.
 
 Let $I:X\to Y$ be the linear operator such that $I(\mathbf e_1)=\tilde{\mathbf e}_1$ and $I(\mathbf e_2)=\tilde{\mathbf e}_2$. Applying Proposition~\ref{p1} and Lemmas~\ref{l:Tingley} and \ref{l:jj}, we can show that the spheres $S_X$ and $S_Y$ have the same radial and tangential jumps $\jj(0)$ and $\jjj(0)$.   
Lemma~\ref{l:xy} implies that $I(\br'(s))=\tilde \br'(s)$ for every $s\in (0,L)$ with $\br(s)\ne\overline{\br(s)}$. Since $X$ is strictly convex, the set $\{s\in[0,L]:\br(s)=\overline{\br(s)}\}$ is a singleton. Now the continuity of the functions $\br'$ and $\tilde\br'$ on $(0,L)$ implies that $I(\br'(s))=\tilde \br'(s)$ for all $s\in(0,L)$. Since $\br$ is continuously differentiable on $[0,L]$, for every $s\in[0,L]$ we have $\br(s)=\br(0)+\int_0^s\br'(t)\,dt$ and hence $$I(\br(s))=I(\br(0))+\int_0^sI(\br'(t))\,dt=\tilde\br(0)+\int_0^s\tilde\br'(t)dt=\tilde \br(s)=f\circ\br(s).$$
By Lemmas~\ref{l:Tingley} and \ref{l:r}(1), $$I(\br(s+L))=I(-\br(s))=-I(\br(s))=-f(\br(s))=f(-\br(s))=f(\br(s+L))$$ for every $s\in[0,L]$ and hence $I\circ\br{\restriction}_{[L,2L]}=\tilde\br{\restriction}_{[L,2L]}$. Therefore, $I$ is a linear operator extending the isometry $f$. The equality $I(S_X)=S_Y$ implies $I(B_X)=B_Y$, which means that $I$ is a linear isometry of the Banach spaces $X,Y$. 

\section*{Acknowledgements}
The second author has been partially supported by
Junta de Extremadura programs GR-15152 and IB-16056 and DGICYT
projects MTM2016-76958-C2-1-P and PID2019-103961GB-C21 (Spain).

\newpage


\begin{thebibliography}{}

\bibitem{Ban} T.~Banakh, {\em Any isometry between the spheres of absolutely smooth $2$-dimensional Banach spaces is linear}, J. Math. Analysis Appl. {\bf 500} (2021) 125104.

\bibitem{Ban2} T.~Banakh, {\em Every $2$-dimensional Banach space has the Ulam--Mazur property}, preprint.

 \bibitem{San} J.~Cabello S\'anchez, {\em A reflection on Tingley’s problem and some applications}, J. Math. Analysis Appl. {\bf 476}:2 (2019), 319--336.
 
 \bibitem{CS} J.~Cabello S\'anchez, {\em Linearity of isometries between convex Jordan curves}, Linear Algebra Appl. {\bf 621} (2021), 1--17. 

 
\bibitem{CD} L.~Cheng, Y.~Dong, {\em On a generalized Mazur--Ulam question: extension of isometries between unit spheres of Banach spaces}, J. Math. Anal. Appl. {\bf 377}:2 (2011), 464--470.



\bibitem{FPP} F.~Fern\'andez-Polo, A.~Peralta, {\em On the extension of isometries between the unit spheres of a $\rm C^*$-algebra and $B(H)$}, Trans. Amer. Math. Soc. Ser. B {\bf 5} (2018), 63--80.

\bibitem{FA} M.~Fabian, P.~Habala, P.~H\'ajek, V.~Montesinos, V.~Zizler, {\em Banach space theory. The basis for linear and nonlinear analysis}, CMS Books in Mathematics, Springer, New York, 2011.

\bibitem{KM} V.~Kadets, M.~Mart\'\i n, {\em Extension of isometries between unit spheres of finite-dimensional polyhedral Banach spaces}, J. Math. Analysis and Appl. {\bf 396} (2012), 441--447.


\bibitem{Man} P.~Mankiewicz, {\em On extension of isometries in normed linear spaces}, Bulletin de l'Acad\'emie Polonaise des Sciences, S\'erie des Sciences
Math\'ematiques, Astronomiques, et Physiques {\bf 20} (1972) 367--371.

\bibitem{MU} S.~Mazur, S.~Ulam, {\em Sur les transformations isom\'etriques d'espaces vectoriels, norm\'es}, Comptes rendus hebdomadaires des s\'eances de l'Acad\'emie des sciences, {\bf 194} (1932) 946--948.

\bibitem{Per} A.M.~Peralta, {\em A survey on Tingley’s problem for operator algebras}, Acta Sci. Math. (Szeged), {\bf 84} (2018), 81--123.


\bibitem{S} W.~Sierpi\'nski, {\em Un th\'eor\`eme sur les continus}, T\^ohoku Math. J. {\bf 13} (1918), 300--305.

\bibitem{Tan} R.~Tanaka, {\em Tingley's problem on symmetric absolute normalized norms on $\mathbb{R}^2$}, Acta Math. Sin. (Engl. Ser.), {\bf 30}:8 (2014), 1324--1340.

\bibitem{Tingley} D. Tingley, {\em Isometries of the unit sphere}, Geom. Dedicata, {\bf 22} (1987) 371--378.





\end{thebibliography}
\end{document}